\newcommand{\vertiii}[1]{{\left\vert\kern-0.25ex\left\vert\kern-0.25ex\left\vert #1 
    \right\vert\kern-0.25ex\right\vert\kern-0.25ex\right\vert}}
\newtheorem{theorem}{Theorem}[section]
\newtheorem{lemma}[theorem]{Lemma}
\newtheorem{proposition}[theorem]{Proposition}
\newtheorem{corollary}[theorem]{Corollary}
\theoremstyle{definition}
\theoremstyle{remark}
\newtheorem{remark}[theorem]{Remark}
\numberwithin{equation}{section}
\def \l{L}
\def \h{H}
\begin{document}
\title{On bounds of logarithmic mean and mean inequality chain}
\author{Shigeru Furuichi}
\address{Department of Information Science, College of Humanities and Sciences, Nihon University, Setagaya-ku, Tokyo, 156-8550, Japan;}
\email{furuichi.shigeru@nihon-u.ac.jp}

\author{Mehdi Eghbali Amlashi}
\address{Department of Mathematics, Mashhad Branch, Islamic Azad University, Mashhad, Iran;}
\email{amlashi@mshdiau.ac.ir}


\thanks{The author (S.F.) was partially supported by JSPS KAKENHI Grant Number 21K03341.}

\date{}

\subjclass[2020]{15A60; 26E60; 26D07;47A30;47A63}

\keywords{arithmetic mean, geometric mean, harmonic mean, logarithmic mean, Heinz mean, binomial mean, Heron mean, Lehmer mean, matrix inequality, norm inequality, unitarily invariant norm, positive definite function and infinitely divisible function}

\begin{abstract}
An upper bound of the logarithmic mean is given by a convex
conbination of the arithmetic mean and the geometric mean. In addition, a lower bound  of the logarithmic mean is given by a geometric bridge of the arithmetic mean and the geometric mean. 
In this paper, we study the bounds of the logarithmic mean.  We give operator inequalities and norm inequalities for the fundamental inequalities on the logarithmic mean. We give monotonicity of the parameter for the unitarily invariant norm of the Heron mean, and give its optimality as the upper bound of the unitarily invariant norm of the logarithmic mean. We study the ordering of the unitarily invariant norms for the Heron mean, the Heinz mean, the binomial mean and the Lehmer mean. Finally, we give a new mean inequality chain as an application of the point--wise inequality.
\end{abstract}

\maketitle
\section{Introduction}\label{sec1}
For any positive real numbers $a$ and $b$, the arithmetic mean, the geometric mean and the harmonic  means are respectively defined by 
$$
A:=A(a,b)=\frac{a+b}{2},\quad G:=G(a,b)=\sqrt{ab}, \quad
H:=H(a,b)=\frac{2ab}{a+b},
$$
and the logarithmic mean is defied by $L:=L(a,b)=\dfrac{a-b}{\log a-\log b}$ with $L(a,a)=a$. Here we write the binomial mean (power mean) by $B_p:=B_p(a,b)=\left(\dfrac{a^p+b^p}{2}\right)^{1/p}$ for $p\in\mathbb{R}$ and $a,b>0$. (e.g., see \cite{B2007} for these means.)
In \cite{TPLin1974}, the inequality $L\le B_{1/3}$ with  the optimality of $p=\dfrac{1}{3}$ was shown. In \cite{FY2013}, the inequality
$B_{1/3} \le \dfrac{2}{3}G+\dfrac{1}{3}A$ was also shown. Thus the P\'olya inequality \cite{Zou} (see also \cite[Lemma 2]{B2006} as an elegant proof),  $L\le \dfrac{2}{3}G+\dfrac{1}{3}A$ was interpolated by the binomial mean $B_{1/3}$. As the comparison between $L$ and $B_p$, the inequality $G=B_0\le L$ with  the optimality of $p=0$  was shown in \cite{TPLin1974}. Precisely, the inequalities $B_0<L<B_{1/3}$ for any distinct $a,b>0$ were shown in \cite{TPLin1974}. As for the lower bound of $L$, it is known that we have $G^{2/3}A^{1/3}\le L$ in \cite{LS1983}. See \cite{Yang2013} for the further precise estimations on the logarithmic mean.

 In this paper, we give new bounds of $L$ applying the self-improvement method to the fundamental inequalities 
 \begin{equation}\label{fund_ineq}
 G\le G^{2/3}A^{1/3}\le L \le \dfrac{2}{3}G+\dfrac{1}{3}A\le A,\quad (a,b>0).
 \end{equation}
 This is same procedure used in \cite{FY2013}. In our paper, we mainly focus on \eqref{fund_ineq} since the bounds of $L$ are given by $A$ and $G$ simply, although there exsits some better bounds \cite{Yang2013,Zheng2000}.
\section{Bounds of logarithmic mean}\label{sec2}
As we stated in Section \ref{sec1}, the lower bound of $L$ has been given in \cite{LS1983} by studying the properties of two parameter Stolarsky mean. We here give its alternative proof by elementary calculations for the convenience to the readers. 
\begin{lemma}\label{le2.1}
For $x>0$, we have
\begin{equation}\label{eq01}
\sqrt{x}\le \left(\frac{x(x+1)}{2}\right)^{1/3}\le \frac{x-1}{\log x}\le \left(\frac{x^{1/3}+1}{2}\right)^3\le \frac{2}{3}\sqrt{x}+\frac{1}{3}\left(\frac{x+1}{2}\right).
\end{equation}
\end{lemma}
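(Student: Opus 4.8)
The plan is to read \eqref{eq01} as a chain of four separate inequalities and to exploit that each of them is an equality exactly at $x=1$, which forces each difference to carry a high-order factor $(x-1)^{2}$ or $(x-1)^{4}$. The two outer inequalities contain no logarithm and are purely algebraic, so I would clear the radicals and reduce to polynomial positivity; the two middle ones are just $G^{2/3}A^{1/3}\le L$ and $L\le B_{1/3}$ written with $a=x$, $b=1$, and for these I would introduce an auxiliary function that vanishes at $x=1$, show it is monotone, and thereby deduce both the $x>1$ and the $0<x<1$ cases at once (after multiplying through by $\log x$ and keeping track of its sign).

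For the first inequality, cubing $\sqrt{x}\le\big(\tfrac{x(x+1)}{2}\big)^{1/3}$ and cancelling $x$ reduces it to $2\sqrt{x}\le x+1$, i.e.\ $(\sqrt{x}-1)^{2}\ge0$. For the last inequality I would substitute $x=t^{6}$ with $t>0$; the left side becomes $\tfrac{(t^{2}+1)^{3}}{8}$ and the right side $\tfrac{t^{6}+4t^{3}+1}{6}$, so after clearing positive denominators the claim is $3(t^{2}+1)^{3}\le4(t^{6}+4t^{3}+1)$, whose difference $t^{6}-9t^{4}+16t^{3}-9t^{2}+1$ I expect to factor as $(t-1)^{4}(t^{2}+4t+1)\ge0$.

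For the second inequality I would set $F(x)=(x-1)\big(\tfrac{2}{x(x+1)}\big)^{1/3}-\log x$, so that $F(1)=0$ and, after the sign bookkeeping described above, the claim is equivalent to $F$ being nondecreasing on $(0,\infty)$. Differentiation gives $F'(x)=\tfrac{2^{1/3}(x^{2}+4x+1)}{3(x^{2}+x)^{4/3}}-\tfrac1x$; since all the quantities involved are positive, cross-multiplying and cubing shows that $F'(x)\ge0$ is equivalent to $2(x^{2}+4x+1)^{3}\ge27x(x+1)^{4}$, and I expect the difference of the two sides to factor as $(x-1)^{4}(2x+1)(x+2)\ge0$, which finishes this case.

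The third inequality I would handle by the substitution $x=t^{3}$: after the sign bookkeeping it reduces to the monotonicity of $g(t)=\log t-\tfrac{8(t^{3}-1)}{3(t+1)^{3}}$, which again vanishes at $t=1$ and should satisfy $g'(t)=\tfrac1t-\tfrac{8(t^{2}+1)}{(t+1)^{4}}=\tfrac{(t+1)^{4}-8t(t^{2}+1)}{t(t+1)^{4}}=\tfrac{(t-1)^{4}}{t(t+1)^{4}}\ge0$. There is no genuinely difficult step: the whole proof hinges on choosing $F$ and $g$ so that their derivatives collapse, together with the two polynomial identities $2(x^{2}+4x+1)^{3}-27x(x+1)^{4}=(x-1)^{4}(2x+1)(x+2)$ and $(t+1)^{4}-8t(t^{2}+1)=(t-1)^{4}$, which is exactly where the fourth-order vanishing at $x=1$ — and hence the sharpness of \eqref{eq01} — appears. (Alternatively one could simply cite Lin \cite{TPLin1974} for $L\le B_{1/3}$ and Leach--Sholander \cite{LS1983} for $G^{2/3}A^{1/3}\le L$, but the self-contained computation above is what the statement is meant to record.)
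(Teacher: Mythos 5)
Your proposal is correct --- I have checked all three claimed factorizations and they hold: $t^{6}-9t^{4}+16t^{3}-9t^{2}+1=(t-1)^{4}(t^{2}+4t+1)$, $2(x^{2}+4x+1)^{3}-27x(x+1)^{4}=(x-1)^{4}(2x+1)(x+2)$, and $(t+1)^{4}-8t(t^{2}+1)=(t-1)^{4}$ --- and the sign bookkeeping that converts each logarithmic inequality into ``$F$ (resp.\ $g$) vanishes at $1$ and is nondecreasing'' is handled properly, so both $x>1$ and $0<x<1$ come out of a single monotonicity statement. The route is genuinely different from the paper's. For the two middle inequalities the paper first reduces to $x\ge1$ via the homogeneity relations $x\,r(1/x)=r(x)$ and $x\,l(1/x)=l(x)$, then clears the cube roots by working with $f(x)=2(x-1)^{3}-x(x+1)(\log x)^{3}$ and $h(x)=3(x+1)^{3}\log x-8(x^{3}-1)$, and pushes the sign analysis down to third and fourth derivatives (with a further auxiliary function $g$ inside the first chain). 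Your choice of auxiliary functions --- isolating $\log x$ and absorbing the radicals into the other term, plus the substitutions $x=t^{3}$ and $x=t^{6}$ --- collapses everything after a \emph{single} differentiation into an explicit $(x-1)^{4}\times(\text{positive polynomial})$ factorization, which is shorter, makes the fourth-order contact at $x=1$ (hence the sharpness of the constants $2/3$ and $1/3$) visible, and avoids any case split. You also prove the last inequality self-containedly as a polynomial identity, where the paper simply cites \cite{FY2013}. The only cost is that your argument depends on the factorizations working out, whereas the paper's iterated-derivative scheme is more mechanical; since the factorizations do check, this is not a defect.
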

\begin{proof}
The first inequality is easily proven by $\left(\dfrac{x(x+1)}{2}\right)^{1/3} = \left(\sqrt{x}\right)^{2/3} \left(\dfrac{x+1}{2}\right)^{1/3}\ge \left(\sqrt{x}\right)^{2/3}\left(\sqrt{x}\right)^{1/3}=\sqrt{x}$ for $x>0$.

Since two functions $r(x):=\left(\dfrac{x(x+1)}{2}\right)^{1/3}$ and $l(x):=\dfrac{x-1}{\log x}$ satisfy $x\cdot r(1/x)=r(x)$ and $x\cdot l(1/x)=l(x)$, it is sufficient to prove the second inequality of \eqref{eq01} for $x \ge 1$. To this end, we set the function
$$
f(x):=2(x-1)^3-x(x+1)(\log x)^3,\quad (x\ge 1).
$$
Then we have
\begin{eqnarray*}
&& f'(x)=6(x-1)^2-3(x+1)(\log x)^2-(2x+1)(\log x)^3,\\
&& f''(x)=12(x-1)-6(1+1/x)\log x-3(3+1/x)(\log x)^2-2(\log x)^3,\\
&& f^{(3)}(x)=\frac{3}{x^2} g(x),\quad g(x):=4x^2-2x-2-6x\log x+(1-2x)(\log x)^2.
\end{eqnarray*}
We also have
\begin{eqnarray*}
&&g'(x)=8(x-1)-2(5-1/x)\log x-2(\log x)^2,\\
&&g''(x)=\frac{8x^2-10x+2-2(1+2x)\log x}{x^2},\\
&&g^{(3)}(x)=\frac{6(x-1)+4(x+1)\log x}{x^3} \ge 0 \quad (x \ge 1),
\end{eqnarray*}
which implies $g''(x)\geq g''(1)=0$ so that $g'(x)\ge g'(1)=0$. Thus we have $g(x)\ge g(1)=0$ which implies $f''(x)\ge f''(1)=0$ so that $f'(x)\ge f'(1)=0$. Thus we have $f(x)\ge f(1)=0$ for $x \ge 1$. Therefore we have the second inequality of \eqref{eq01} for $x>0$. 

Similarly we set the function 
$$
h(x):=3(x+1)^3\log x-8(x^3-1),\quad (x\ge 1).
$$
Then we have
\begin{eqnarray*}
&& h'(x)=3\left(3(x+1)^2\log x-7x^2+3x+3+1/x\right),\\
&& h''(x)=3\left(6(x+1)\log x-11x+9+3/x-1/x^2\right)\\
&& h^{(3)}(x)=3\left(6\log x-5+6/x-3/x^2+2/x^3\right)\\
&& h^{(4)}(x)=\frac{18(x-1)(x^2+1)}{x^4} \ge 0,\quad (x \ge 1),
\end{eqnarray*}
which implies $h^{(3)}(x)\ge h^{(3)}(1) =0$ so that $h''(x) \ge h''(1)=0$. Therefore we have $h'(x)\ge h'(1)=0$ which implies $h(x)\ge h(1)=0$. Thus we have the third inequality of \eqref{eq01} for $x \ge 1$. Replacing $x$ by $1/x$ in the third inequality of \eqref{eq01}, we obtain the third inequality of \eqref{eq01} for $x >0$. The last inequality was shown in \cite[Lemma 1.1]{FY2013} by an  elementary calculation.
\end{proof}

From Lemma \ref{le2.1}, we obtain a lower bound of the logarithmic mean by $A$ and $G$ in the following.
\begin{corollary}\label{co2.2}
We have 
\[
G\le G^{2/3} A^{1/3}\leq L\le B_{1/3}\le \frac{2}{3}G+\frac{1}{3}A.
\]
\end{corollary}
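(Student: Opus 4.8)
The plan is to deduce Corollary \ref{co2.2} directly from Lemma \ref{le2.1} by the standard homogenization substitution $x = a/b$. First I would note that each of the four means appearing in the chain, namely $G$, $G^{2/3}A^{1/3}$, $L$, $B_{1/3}$, and also $\tfrac{2}{3}G+\tfrac{1}{3}A$, is positively homogeneous of degree one in $(a,b)$; hence it suffices to prove the inequalities after dividing through by $b>0$ and setting $x:=a/b>0$. Under this normalization one checks the elementary identities $G(a,b)/b = \sqrt{x}$, $A(a,b)/b = (x+1)/2$, $L(a,b)/b = (x-1)/\log x$, $G^{2/3}A^{1/3}/b = \bigl(\sqrt{x}\bigr)^{2/3}\bigl((x+1)/2\bigr)^{1/3} = \bigl(x(x+1)/2\bigr)^{1/3}$, and $B_{1/3}(a,b)/b = \bigl((x^{1/3}+1)/2\bigr)^{3}$.

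With these identities in hand, the chain $G\le G^{2/3}A^{1/3}\le L\le B_{1/3}\le \tfrac{2}{3}G+\tfrac{1}{3}A$ for arbitrary $a,b>0$ becomes, after dividing by $b$, exactly the chain \eqref{eq01} of Lemma \ref{le2.1} evaluated at $x=a/b$. So the second step is simply to invoke Lemma \ref{le2.1} and multiply back through by $b$. The case $a=b$ (i.e.\ $x=1$) is handled by continuity or by direct inspection, since all five quantities reduce to $a$ there, consistent with the convention $L(a,a)=a$.

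I do not expect any genuine obstacle here: the corollary is a pure reformulation of the lemma, and the only thing to be careful about is recording the homogenization identities correctly, in particular the rewriting $G^{2/3}A^{1/3} = \bigl(G^2 A\bigr)^{1/3}$ so that after normalization it matches $\bigl(x(x+1)/2\bigr)^{1/3}$, and the identity $B_{1/3} = \bigl((a^{1/3}+b^{1/3})/2\bigr)^{3}$ which, divided by $b = (b^{1/3})^3$, gives $\bigl((x^{1/3}+1)/2\bigr)^{3}$. If I wanted to make the argument fully self-contained I would also remark that reversing the substitution is immediate: given $a,b>0$ we set $x=a/b$, apply \eqref{eq01}, and multiply by $b$; the scaling $x\leftrightarrow 1/x$ symmetry already exploited inside the proof of Lemma \ref{le2.1} guarantees there is no loss of generality in assuming $a\ge b$.
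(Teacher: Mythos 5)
Your proposal is correct and is exactly the paper's argument: the paper's proof is the one-line statement ``Putting $x=\dfrac{a}{b}$ in \eqref{eq01}, we have the result,'' and your write-up merely makes explicit the homogenization identities and the $a=b$ case that this substitution implicitly relies on. No gaps.
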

\begin{proof}
Putting $x=\dfrac{a}{b}$ in \eqref{eq01}, we have the result. 
\end{proof}
\begin{remark}
If $\dfrac{2}{3}\le t \le 1$ and $0\le s \le \dfrac{2}{3}$, then we have trivially
\begin{equation}\label{bounds_LM_ineq_st}
 G^{t}A^{1-t}\le L \leq s G+(1-s) A.
\end{equation}
\end{remark}

The right hand side of \eqref{bounds_LM_ineq_st} for $s\in [0,1]$ is often called the Heron mean. A mean $M$ is often called a geometric bridge \cite{HKPR} of two means $M_1$ and $M_2$ if it is written by $M=M_1^{r}M_2^{1-r}$ for $r\in [0,1]$. So a lower bound $G^{2/3}A^{1/3}$ is a geometric bridge of $G$ and $A$ with $r=2/3$.
In the following proposition, we give the optimality of $t=\dfrac{2}{3}$ and $s=\dfrac{2}{3}$ in \eqref{bounds_LM_ineq_st}. 
\begin{proposition}\label{prop_2.4}
We have
\begin{equation}\label{prop_eq01}
\inf \left\{t\in[0,1]: \left(\sqrt{x}\right)^t\left(\frac{x+1}{2} \right)^{1-t}\le \frac{x-1}{\log x},\quad (x>0)\right\}=\frac{2}{3}
\end{equation}
and
\begin{equation}\label{prop_eq02}
\sup \left\{s\in[0,1]:  \frac{x-1}{\log x}\le s\sqrt{x}+(1-s)\left(\frac{x+1}{2}\right),\quad (x>0)\right\}=\frac{2}{3}.
\end{equation}
\end{proposition}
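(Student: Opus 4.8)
The plan is to prove each assertion by separating \emph{sufficiency} (the threshold value itself works) from \emph{necessity} (nothing beyond the threshold works), after first observing that each of the two sets in question is an interval, so that only one endpoint must be located. Throughout this proof write $G=\sqrt{x}$, $A=\frac{x+1}{2}$, $L=\frac{x-1}{\log x}$ for $x>0$. Since $G\le A$ by the AM--GM inequality, the map $t\mapsto G^{t}A^{1-t}=A\cdot(G/A)^{t}$ is nonincreasing for each fixed $x$; hence if some $t$ lies in the set in \eqref{prop_eq01}, so does every larger $t\in[0,1]$, and because the defining condition is closed in $t$ that set equals $[t_0,1]$ with $t_0$ attained. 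Likewise $s\mapsto s\sqrt{x}+(1-s)\frac{x+1}{2}=A+s(G-A)$ is nonincreasing, so the set in \eqref{prop_eq02} equals $[0,s_0]$ with $s_0$ attained. Sufficiency is immediate from Corollary~\ref{co2.2}, which gives $G^{2/3}A^{1/3}\le L$ and $L\le\frac{2}{3}G+\frac{1}{3}A$: thus $\frac23\in[t_0,1]$ and $\frac23\in[0,s_0]$, i.e. $t_0\le\frac23$ and $s_0\ge\frac23$. It remains to prove $t_0\ge\frac23$ and $s_0\le\frac23$.

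For the lower bound on $t_0$ I would carry out a second-order expansion at $x=1$. Writing $x=1+h$ and using $\log(1+h)=h-\frac{h^2}{2}+O(h^3)$, one gets $L=1+\frac h2-\frac{h^2}{12}+O(h^3)$, hence $\log L=\frac h2-\frac{5}{24}h^2+O(h^3)$, while $t\log\sqrt{x}+(1-t)\log\frac{x+1}{2}=\frac h2-\frac{t+1}{8}h^2+O(h^3)$. Subtracting gives
\[
\log\frac{x-1}{\log x}-\log\!\left((\sqrt{x})^{t}\Bigl(\tfrac{x+1}{2}\Bigr)^{1-t}\right)=\frac{3t-2}{24}\,h^2+O(h^3).
\]
If $t<\frac23$ the right-hand side is strictly negative for all small $h\neq0$, so $(\sqrt{x})^{t}\bigl(\frac{x+1}{2}\bigr)^{1-t}>\frac{x-1}{\log x}$ for $x$ near $1$, whence $t\notin[t_0,1]$. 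Therefore $t_0\ge\frac23$, and combined with sufficiency $t_0=\frac23$, which is \eqref{prop_eq01}.

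For the upper bound on $s_0$, the same substitution gives $s\sqrt{x}+(1-s)\frac{x+1}{2}=1+\frac h2-\frac{s}{8}h^2+O(h^3)$, so
\[
\left(s\sqrt{x}+(1-s)\tfrac{x+1}{2}\right)-\frac{x-1}{\log x}=\frac{2-3s}{24}\,h^2+O(h^3).
\]
If $s>\frac23$ this is strictly negative for small $h\neq0$, so the inequality in \eqref{prop_eq02} fails near $x=1$ and $s\notin[0,s_0]$; hence $s_0\le\frac23$, and with sufficiency $s_0=\frac23$, which is \eqref{prop_eq02}.

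The only real care needed is in the Taylor bookkeeping — in particular the coefficients $-\frac1{12}$ in the expansion of $L$ and $-\frac{5}{24}$ in that of $\log L$ — since an arithmetic slip there would shift the threshold; I would confirm these by an independent check, for instance by evaluating $L$, $G^{t}A^{1-t}$ and $sG+(1-s)A$ at a specific point such as $x=e^{2}$, or by examining the $x\to\infty$ asymptotics. If one prefers to avoid differential calculus, an equivalent route for the necessity direction is to exhibit, for each $t<\frac23$ (resp.\ $s>\frac23$), an explicit $x$ violating the inequality; but the local expansion is shorter and transparently explains why $\frac23$ is exactly the critical value.
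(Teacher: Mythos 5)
Your proof is correct, and it is essentially the paper's argument: both directions rest on the fundamental inequalities of Corollary~\ref{co2.2} for sufficiency and on the local behaviour at $x=1$ for optimality, the only difference being that you extract the second-order coefficient $\frac{3t-2}{24}h^2$ (resp.\ $\frac{2-3s}{24}h^2$) by Taylor expansion where the paper computes the limit of the equivalent threshold functions $\log_{ratio}(x)$ and ${\rm diff}_{ratio}(x)$ at $x=1$ by repeated l'H\^{o}pital. Your Taylor coefficients ($L=1+\frac h2-\frac{h^2}{12}+O(h^3)$, $\log L=\frac h2-\frac{5}{24}h^2+O(h^3)$) check out, so the argument is complete.
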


\begin{proof}
The condition in \eqref{prop_eq01} is equivalent to the inequality:
$$
t \ge {{\log \left( {\dfrac{{\dfrac{{x - 1}}{{\log x}}}}{{\dfrac{{x + 1}}{2}}}} \right)} \mathord{\left/
 {\vphantom {{\log \left( {\dfrac{{\dfrac{{x - 1}}{{\log x}}}}{{\dfrac{{x + 1}}{2}}}} \right)} {\log \left( {\dfrac{{\sqrt x }}{{\dfrac{{x + 1}}{2}}}} \right)}}} \right.
 \kern-\nulldelimiterspace} {\log \left( {\dfrac{{\sqrt x }}{{\dfrac{{x + 1}}{2}}}} \right)}}=:\log_{ratio}(x).
$$
By the first inequality in \eqref{fund_ineq}, we have
$$
0< \log_{ratio}(x) \le \dfrac{2}{3}.
$$
By repeating the l'H\^{o}pital's rule, we have
\begin{eqnarray*}
&& \lim_{x\to 1}\log_{ratio}(x)
 =\lim_{x\to 1}\frac{2\left(x^2-1-2x\log x\right)}{(x-1)^2\log x} =\lim_{x\to 1}\frac{4x\left(x-1-\log x \right)}{(x-1)\left(x-1+2x\log x\right)}\\
 && =\lim_{x\to 1}\frac{8\left(x-1\right)-4\log x}{4(x-1)+2(2x-1)\log x}=\lim_{x\to 1}\frac{8-4/x}{8-2/x+4\log x}=\frac{2}{3}.
\end{eqnarray*}
Thus we have \eqref{prop_eq01}. 

Similarly the condition in \eqref{prop_eq02} is equivalent to the inequality:
$$
s \le {{\left( {\frac{{x + 1}}{2} - \frac{{x - 1}}{{\log x}}} \right)} \mathord{\left/
 {\vphantom {{\left( {\frac{{x + 1}}{2} - \frac{{x - 1}}{{\log x}}} \right)} {\left( {\frac{{x + 1}}{2} - \sqrt x } \right)}}} \right.
 \kern-\nulldelimiterspace} {\left( {\frac{{x + 1}}{2} - \sqrt x } \right)}}=:{\rm diff}_{ratio}(x).
$$
By the second inequality in \eqref{fund_ineq} and $G\le L$, we have
$$
\frac{2}{3}\le {\rm diff}_{ratio}(x) \le 1.
$$
By repeating the l'H\^{o}pital's rule, we also have
\begin{eqnarray*}
&& \lim_{x\to 1} {\rm diff}_{ratio}(x)
 =\lim_{x\to 1}\frac{2(x-1)-2x\log x+x(\log x)^2}{\sqrt{x}(\sqrt{x}-1)(\log x)^2}\\
 && =\lim_{x\to 1}\frac{2\sqrt{x}\log x}{4(\sqrt{x}-1)+(2\sqrt{x}-1)\log x}
 =\lim_{x\to 1}\frac{\sqrt{x}(2+\log x)}{4\sqrt{x}-1+\sqrt{x}\log x}=\frac{2}{3}.
\end{eqnarray*}
Thus we have \eqref{prop_eq02}.
\end{proof}

Applying same approach given in \cite{FY2013} to the fundamental inequalities \eqref{fund_ineq}, we give the better bound of $L$ in the following.
 \begin{theorem}\label{theorem_sum_ref}
 For $a,b>0$ and $m\in \mathbb{N}$, we have
 \begin{eqnarray}
 &&\frac{1}{m} \sum_{k=1}^ma^{\frac{2k-1}{2m}}b^{\frac{2m-(2k-1)}{2m}} \le \left(\frac{1}{m}\sum_{k=1}^ma^{\frac{m-k}{m}}b^{\frac{k-1}{m}}\right)\left\{\frac{a^{\frac{1}{m}}\left(a^{\frac{1}{m}}+b^{\frac{1}{m}}\right) b^{\frac{1}{m}}}{2}\right\}^{1/3} \nonumber \\
 && \le L \le \frac{2}{3m}\sum_{k=1}^ma^{\frac{2k-1}{2m}}b^{\frac{2m-(2k-1)}{2m}}+\frac{1}{3m}\left(\sum_{k=0}^ma^{\frac{k}{m}}b^{\frac{m-k}{m}}-\frac{a+b}{2}\right) \nonumber \\
 && \le \frac{1}{m}\left(\sum_{k=0}^ma^{\frac{k}{m}}b^{\frac{m-k}{m}}-\frac{a+b}{2}\right).\label{sum_ref_ineq01}
 \end{eqnarray}
 \end{theorem}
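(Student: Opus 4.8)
The plan is to derive \eqref{sum_ref_ineq01} from the one–variable fundamental chain \eqref{fund_ineq} by exploiting the behaviour of $L$ under extraction of $m$–th roots, exactly as in the self–improvement scheme of \cite{FY2013}.

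First I would record the two elementary factorizations
$$a-b=\left(a^{1/m}-b^{1/m}\right)\sum_{k=0}^{m-1}a^{k/m}b^{(m-1-k)/m},\qquad \log a-\log b=m\left(\log a^{1/m}-\log b^{1/m}\right),$$
whose quotient yields the key multiplicative identity
$$L(a,b)=\left(\frac1m\sum_{k=0}^{m-1}a^{k/m}b^{(m-1-k)/m}\right)L\!\left(a^{1/m},b^{1/m}\right).$$
A reindexing $k\mapsto k-1$ shows the prefactor equals $\frac1m\sum_{k=1}^m a^{(m-k)/m}b^{(k-1)/m}$, the quantity appearing in the statement, and it is strictly positive. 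Since $G\!\left(a^{1/m},b^{1/m}\right)=a^{1/(2m)}b^{1/(2m)}$ and $A\!\left(a^{1/m},b^{1/m}\right)=\tfrac12\!\left(a^{1/m}+b^{1/m}\right)$, one also gets $G\!\left(a^{1/m},b^{1/m}\right)^{2/3}A\!\left(a^{1/m},b^{1/m}\right)^{1/3}=\{a^{1/m}(a^{1/m}+b^{1/m})b^{1/m}/2\}^{1/3}$, which is exactly the bracketed term of \eqref{sum_ref_ineq01}.

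Next I would apply \eqref{fund_ineq} to the pair $\left(a^{1/m},b^{1/m}\right)$ and multiply the resulting chain $G\le G^{2/3}A^{1/3}\le L\le \tfrac23 G+\tfrac13 A\le A$ through by the positive prefactor above. By the multiplicative identity the two middle terms become precisely $\left(\frac1m\sum a^{(m-k)/m}b^{(k-1)/m}\right)\{\cdots\}^{1/3}$ and $L(a,b)$. For the three remaining terms I would distribute: multiplying $\frac1m\sum_{k=0}^{m-1}a^{k/m}b^{(m-1-k)/m}$ by $a^{1/(2m)}b^{1/(2m)}$ and shifting the index produces $\frac1m\sum_{k=1}^m a^{(2k-1)/(2m)}b^{(2m-(2k-1))/(2m)}$, which handles the leftmost term and, with coefficient $\tfrac23$, the first half of the $\tfrac23G+\tfrac13A$ term; and multiplying the prefactor by $\tfrac12(a^{1/m}+b^{1/m})$ produces, via the telescoping identity
$$\left(\sum_{k=0}^{m-1}a^{k/m}b^{(m-1-k)/m}\right)\frac{a^{1/m}+b^{1/m}}{2}=\sum_{k=0}^{m}a^{k/m}b^{(m-k)/m}-\frac{a+b}{2},$$
the term $\frac1m\!\left(\sum_{k=0}^m a^{k/m}b^{(m-k)/m}-\tfrac{a+b}{2}\right)$ (occurring with coefficient $\tfrac13$ in the bound for $L$ and with coefficient $1$ as the outermost bound). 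This last identity I would verify by splitting the left side into $\sum_{k=0}^{m-1}a^{(k+1)/m}b^{(m-1-k)/m}$ and $\sum_{k=0}^{m-1}a^{k/m}b^{(m-k)/m}$, reindexing the first, and collecting overlapping monomials. Assembling the four resulting inequalities into a chain gives \eqref{sum_ref_ineq01}.

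The argument carries no new analytic content: everything substantive sits in the fundamental inequalities \eqref{fund_ineq} and in the multiplicativity of $L$. The only place where care is required is the index bookkeeping — the $k\mapsto k\pm1$ shifts and the ranges $0\le k\le m-1$ versus $1\le k\le m$ versus $0\le k\le m$ — since a single off–by–one slip corrupts the whole chain; that purely mechanical bookkeeping is the main obstacle.
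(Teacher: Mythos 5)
Your proof is correct and is essentially the paper's own argument: the paper substitutes $x:=t^{1/m}$ into the fundamental chain \eqref{fund_ineq}, multiplies by $1+t^{1/m}+\cdots+t^{(m-1)/m}$, and sets $t=a/b$, which is exactly your multiplication of the chain at $\left(a^{1/m},b^{1/m}\right)$ by the positive telescoping prefactor. Your index bookkeeping and the identity $\bigl(\sum_{k=0}^{m-1}a^{k/m}b^{(m-1-k)/m}\bigr)\frac{a^{1/m}+b^{1/m}}{2}=\sum_{k=0}^{m}a^{k/m}b^{(m-k)/m}-\frac{a+b}{2}$ all check out.
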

 \begin{proof}
 Inserting $x:=t^{1/m}$ in the fundamental inequalities 
 $$
 \sqrt{x}\le \left(\frac{x(x+1)}{2}\right)^{1/3}\le \frac{x-1}{\log x} \le \frac{2}{3}\sqrt{x}+\frac{1}{3}\left(\frac{x+1}{2}\right)\le \frac{x+1}{2},\quad (x>0)
 $$
 which is due to \eqref{fund_ineq}, and multipying $t^{\frac{m-1}{m}}+t^{\frac{m-2}{m}}+\cdots +t^{\frac{1}{m}}+1$ to both sides, and then putting $t:=a/b>0$ in the obtained inequalities and multiplying $b$ to both sides, we have the desired results.
 \end{proof}
  It is natural that Theorem \ref{theorem_sum_ref} improves \cite[Lemma 2.5, Lemma 3.4]{FY2013} since we used the tight fundamental inequalities \eqref{fund_ineq}, compared with the inequalities $G\le L \le A$, which were used in \cite{FY2013} as the fundamental inequalities. 
 \begin{corollary}
 We have the inequalities
   \begin{eqnarray*}
 && G^{1/2}\left(\frac{A+G}{2}\right)^{1/2}\le G^{1/3}\left(\frac{A+G}{2}\right)^{2/3}\le L \\
 &&\le \frac{2}{3} G^{1/2}\left(\frac{A+G}{2}\right)^{1/2}+\frac{1}{3}\left(\frac{A+G}{2}\right)\le \frac{A+G}{2}.
  \end{eqnarray*}
  \end{corollary}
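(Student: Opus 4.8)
The plan is to obtain this corollary as the special case $m=2$ of Theorem \ref{theorem_sum_ref}, by rewriting each of the five quantities appearing in \eqref{sum_ref_ineq01} in terms of $A$ and $G$. The single algebraic identity that does all the work is
\[
\Bigl(\tfrac{\sqrt a+\sqrt b}{2}\Bigr)^{2}=\frac{a+b+2\sqrt{ab}}{4}=\frac{A+G}{2},
\qquad\text{i.e.}\qquad
\frac{\sqrt a+\sqrt b}{2}=\Bigl(\frac{A+G}{2}\Bigr)^{1/2},
\]
used together with $\sqrt a\,\sqrt b=G$ and $a+b=2A$.

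First I would rewrite the leftmost term of \eqref{sum_ref_ineq01} at $m=2$: \[\tfrac12\bigl(a^{1/4}b^{3/4}+a^{3/4}b^{1/4}\bigr)=(ab)^{1/4}\cdot\frac{\sqrt a+\sqrt b}{2}=G^{1/2}\Bigl(\frac{A+G}{2}\Bigr)^{1/2},\] which is the first entry of the claimed chain (and, multiplied by $2/3$, the first summand of the fourth entry). Next I would handle the second term: for $m=2$ the prefactor $\tfrac1m\sum_{k=1}^m a^{(m-k)/m}b^{(k-1)/m}$ equals $\tfrac12(\sqrt a+\sqrt b)=\bigl(\tfrac{A+G}{2}\bigr)^{1/2}$, while the bracketed cube root becomes $\bigl\{\tfrac12(ab)^{1/2}(\sqrt a+\sqrt b)\bigr\}^{1/3}=\bigl\{G(\tfrac{A+G}{2})^{1/2}\bigr\}^{1/3}=G^{1/3}(\tfrac{A+G}{2})^{1/6}$; the product of the two factors is $G^{1/3}(\tfrac{A+G}{2})^{2/3}$, the second entry of the chain.

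Finally I would evaluate the remaining sum: for $m=2$, $\sum_{k=0}^{2}a^{k/2}b^{(2-k)/2}=b+\sqrt{ab}+a=2A+G$, hence $\sum_{k=0}^{2}a^{k/2}b^{(2-k)/2}-\tfrac{a+b}{2}=2A+G-A=A+G$; dividing by $m=2$ yields the rightmost entry $\frac{A+G}{2}$, and $\frac{1}{3m}(A+G)=\frac13\cdot\frac{A+G}{2}$ supplies the missing summand of the fourth entry. Substituting all of these identities into \eqref{sum_ref_ineq01} with $m=2$ reproduces the asserted inequality chain verbatim.

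Since every step is a substitution into an inequality already established in Theorem \ref{theorem_sum_ref} followed by elementary simplification of radicals, there is essentially no obstacle. The only place to be careful is the bookkeeping of fractional exponents — in particular checking $\tfrac12+\tfrac16=\tfrac23$ when combining the two factors of the second term, and $\tfrac14+\tfrac14=\tfrac12$ for the first — and confirming that the $\tfrac{a+b}{2}$ correction in the fourth and fifth entries of \eqref{sum_ref_ineq01} is precisely what turns $2A+G$ into $A+G$.
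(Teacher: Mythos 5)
Your proposal is correct and is exactly the paper's argument: the paper likewise sets $m=2$ in Theorem \ref{theorem_sum_ref} and simplifies the resulting terms via $\bigl(\frac{\sqrt a+\sqrt b}{2}\bigr)^2=\frac{A+G}{2}$ and $a+\sqrt{ab}+b=2A+G$. Your exponent bookkeeping ($\tfrac12+\tfrac16=\tfrac23$, etc.) checks out, so nothing further is needed.
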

  \begin{proof}
If we take $m=2$ in \eqref{sum_ref_ineq01}, then we have
  \begin{eqnarray*}
&& a^{1/4}b^{1/4}\left(\frac{\sqrt{a}+\sqrt{b}}{2}\right)\le \left(\sqrt{ab}\right)^{1/3}\left(\frac{\sqrt{a}+\sqrt{b}}{2}\right)^{4/3}\le L\\
&&  \le \frac{2}{3}a^{1/4}b^{1/4}\left(\frac{\sqrt{a}+\sqrt{b}}{2}\right)+\frac{1}{6}\left(\frac{a+b}{2}+\sqrt{ab}\right)\le \frac{1}{2}\left(\frac{a+b}{2}+\sqrt{ab}\right),
  \end{eqnarray*}
 which implies the result.
 \end{proof}
   Taking a large $m\in \mathbb{N}$, we can give tighter bounds of $L$, although it may be complicated expressions, however.

Based on \eqref{bounds_LM_ineq_st}, we obtain matrix inequalities for  positive definite matrices.
For  positive definite matrix $S$ and  positive semi-definite matrix $T$, the weighted arithmetic mean and the weighted geometric mean are defined by
$$
S\nabla_v T:=(1-v)S+vT,\quad S\sharp_vT:=S^{1/2}\left(S^{-1/2}TS^{-1/2}\right)^vS^{1/2},\quad (0\le v \le 1).
$$
Cosidering $S_{\varepsilon}:=S+\varepsilon I$ and the limit argument $\varepsilon \to 0$, $S\sharp_vT$ is defined for positive semi-definite matrices $S$ and $T$. For simplidity, we use $\nabla$ and $\sharp$ instead of $\nabla_{1/2}$ and $\sharp_{1/2}$, respectively.
The logarithmic mean for positive semi-definite matrices  $S$ and $T$ is defined by 
$$
S\ell T:=\int_0^1S\sharp_vTdv.
$$

Then we have the following results.
\begin{theorem}
Let $\dfrac{2}{3}\le t \le 1$ and $0\le s \le \dfrac{2}{3}$. For a positive definite matrix  $S$ and a positive semi-definite matrix $T$, we have
\begin{equation}\label{theorem_op_ineq01}
S\sharp T\le \left(S\sharp_{t/2}T\right)S^{-1}\left(S\sharp_{1-t}\left(S\nabla T\right)\right)\le S\ell T\le s S\sharp T+(1-s)S\nabla T \le S\nabla T.
\end{equation}
We also have for $p\ge \dfrac{1}{3}$,
\begin{equation}\label{theorem_op_ineq02}
S\ell T\le S^{1/2}\left(\dfrac{\left(S^{-1/2}TS^{-1/2}\right)^p+I}{2}\right)^{1/p}S^{1/2},
\end{equation}
where the right hand side of \eqref{theorem_op_ineq02} is called the matrix power mean \cite{DDF}.
\end{theorem}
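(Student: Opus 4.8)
The plan is to reduce all four operator inequalities to scalar inequalities that are already established, by the standard congruence substitution. Since $S$ is positive definite, I would set $X:=S^{-1/2}TS^{-1/2}\ge 0$ and use that the weighted geometric mean is congruence covariant, $C^{*}(S\sharp_{v}T)C=(C^{*}SC)\sharp_{v}(C^{*}TC)$ for every invertible $C$, that the weighted arithmetic mean trivially is, and hence — by integrating over $v\in[0,1]$ — so is $\ell$. Applying the congruence $C=S^{-1/2}$ then gives $S^{-1/2}(S\sharp_{v}T)S^{-1/2}=X^{v}$, $S^{-1/2}(S\nabla T)S^{-1/2}=\frac{I+X}{2}$, $S^{-1/2}(S\ell T)S^{-1/2}=\int_{0}^{1}X^{v}\,dv$, and, since the inner factor $S^{1/2}S^{-1}S^{1/2}=I$ cancels, $S^{-1/2}\bigl[(S\sharp_{t/2}T)\,S^{-1}\,(S\sharp_{1-t}(S\nabla T))\bigr]S^{-1/2}=X^{t/2}\bigl(\frac{I+X}{2}\bigr)^{1-t}$; because $X$ and $\frac{I+X}{2}$ commute, the last expression is an unambiguous positive operator given by functional calculus on $X$. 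Conjugation by the invertible $S^{-1/2}$ preserves the operator order, so \eqref{theorem_op_ineq01} is equivalent to
\[
X^{1/2}\le X^{t/2}\left(\frac{I+X}{2}\right)^{1-t}\le \int_{0}^{1}X^{v}\,dv\le s\,X^{1/2}+(1-s)\frac{I+X}{2}\le \frac{I+X}{2}.
\]

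Next, since every term in this chain is a continuous function of the single positive operator $X$ — in particular $\int_{0}^{1}X^{v}\,dv=\varphi(X)$ with $\varphi(\lambda)=\frac{\lambda-1}{\log\lambda}$ for $\lambda>0$, $\lambda\ne1$, $\varphi(1)=1$ and $\varphi(0)=0$ — the spectral theorem reduces the displayed operator chain to the scalar chain
\[
\sqrt{\lambda}\le(\sqrt\lambda)^{t}\left(\frac{\lambda+1}{2}\right)^{1-t}\le\varphi(\lambda)\le s\sqrt\lambda+(1-s)\frac{\lambda+1}{2}\le\frac{\lambda+1}{2}
\]
for all $\lambda\in\sigma(X)\subseteq[0,\infty)$. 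For $\lambda>0$ the two middle inequalities are exactly \eqref{bounds_LM_ineq_st} with $a=\lambda$, $b=1$ (which applies because $\frac23\le t\le1$ and $0\le s\le\frac23$), and the first and last are immediate from the scalar inequality $\sqrt\lambda\le\frac{\lambda+1}{2}$ together with $0\le t\le1$, $0\le s\le1$; the case $\lambda=0$ follows by continuity of all five functions at the origin. This proves \eqref{theorem_op_ineq01}.

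For \eqref{theorem_op_ineq02}, the same substitution gives $S^{-1/2}\bigl[S^{1/2}\bigl(\frac{(S^{-1/2}TS^{-1/2})^{p}+I}{2}\bigr)^{1/p}S^{1/2}\bigr]S^{-1/2}=\bigl(\frac{X^{p}+I}{2}\bigr)^{1/p}$, so it suffices to show $\int_{0}^{1}X^{v}\,dv\le\bigl(\frac{X^{p}+I}{2}\bigr)^{1/p}$, and by the spectral theorem this reduces to the scalar bound $\varphi(\lambda)=L(\lambda,1)\le B_{p}(\lambda,1)=\bigl(\frac{\lambda^{p}+1}{2}\bigr)^{1/p}$ for $\lambda\ge0$. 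That bound holds because $L\le B_{1/3}$ by \cite{TPLin1974} and the power mean $p\mapsto B_{p}$ is nondecreasing, whence $B_{1/3}(\lambda,1)\le B_{p}(\lambda,1)$ for $p\ge\frac13$; once more $\lambda=0$ is covered by continuity.

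The only point that genuinely needs care will be the congruence covariance of the weighted geometric mean (a standard fact of Kubo--Ando theory) together with the observations that conjugation by the invertible $S^{-1/2}$ is order preserving and commutes with the integral defining $\ell$; granting these, no operator monotonicity of matrix functions is needed, because after the reduction every quantity is a function of the single operator $X$ and the spectral theorem does the rest. A minor subtlety is that $T$ is only positive semidefinite, so $0$ may belong to $\sigma(X)$, but this is harmless since each scalar function above extends continuously to $\lambda=0$.
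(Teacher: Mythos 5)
Your proof is correct and takes essentially the same route as the paper's: substitute $X:=S^{-1/2}TS^{-1/2}$, conjugate by $S^{1/2}$, and reduce every term to a function of the single operator $X$, so that \eqref{theorem_op_ineq01} follows from the scalar chain \eqref{bounds_LM_ineq_st} and \eqref{theorem_op_ineq02} from $L\le B_{1/3}\le B_p$ via the monotonicity of $p\mapsto B_p$. You merely make explicit the details the paper leaves implicit (the identification of the middle term $X^{t/2}\left(\frac{I+X}{2}\right)^{1-t}$, the spectral-theorem step, and the $0\in\sigma(X)$ case), and you replace the paper's appeal to Kubo--Ando theory for \eqref{theorem_op_ineq02} with the same single-operator functional-calculus argument, which indeed suffices here.
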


\begin{proof}
Since $\displaystyle{\frac{x-1}{\log x}=\int_0^1x^vdv}$, we have
$$
\sqrt{x} \le (\sqrt{x})^t\left(\frac{x+1}{2}\right)^{1-t}\le \int_0^1x^vdv \le s\sqrt{x}+(1-s)\left(\frac{x+1}{2}\right)\le \frac{x+1}{2}
$$
from \eqref{bounds_LM_ineq_st}.
Inserting $x:=S^{-1/2}TS^{-1/2}$ in the above inequalities and multiplying $S^{1/2}$ to both sides,
we obtain \eqref{theorem_op_ineq01}.

Since we have $y \log y\ge (y+1)\log \left(\dfrac{y+1}{2}\right)$ for $y>0$, we have for $x>0$ and $p\in \mathbb{R}$,
$$
\frac{dB_p(x,1)}{dp}=\frac{1}{p^2(x^p+1)}\left(\frac{x^p+1}{2}\right)^{1/p}\left\{x^p\log x^p-(x^p+1)\log \left(\frac{x^p+1}{2}\right)\right\}\ge 0.
$$
This monotonicity on $p\in \mathbb{R}$ with the third inequality in \eqref{eq01} gives \eqref{theorem_op_ineq02} by Kubo-Ando theory \cite{KA}.
\end{proof}

It is natural that we do not have the ordering 
$$
\left(\dfrac{x^p+1}{2}\right)^{1/p}\le s \sqrt{x}+(1-s)\left(\dfrac{x+1}{2}\right)
$$
under the assumption as $p\ge \dfrac{1}{3}$ and $0\le s \le \dfrac{2}{3}$. (Take $p=1$ and $s=\dfrac{2}{3}$.)
As for the comparison  between the binomial mean and the Heron mean, see \cite[Lemma 2.1]{DDF}.
We give the comparison of these means in the end of Section \ref{sec3}.

\section{Norm inequalities}\label{sec3}

 We easily find that the logarithmic mean $L$ and 
 $$K_r:=K_r(a,b)=\left(\sqrt{ab}\right)^r\left(\dfrac{a+b}{2}\right)^{1-r},\,\,(0\le r \le 2)$$ are symmetric homogeneous means, since we have for $x>0$
 $$
 \frac{dK_r(x,1)}{dx}=\left(\dfrac{2\sqrt{x}}{x+1}\right)^r\left(\dfrac{r+(2-r)x}{4x}\right)\ge 0,
 $$
 if $0\le r \le 2$. We easily find $K_0=A,\,\, K_1=G$ and $K_2=H$. Thus the mean $K_r$ interpolates the arithmetic mean, the geometric mean and the harmonic mean. We also find $K_r$ is decreasing in  $r\in\mathbb{R}$ by $\dfrac{dK_r}{dr}=G^r A^{1-r}\log\dfrac{G}{A}\le 0$. The mean $K_r$ is the geometric bridge of the means $G$ and $A$ if $0\le r \le 1$. It is known \cite[Proposition 1.2]{HK1999India}  (see also \cite[Proposition 2.5]{K2011}) that the point-wise ordering such as $K_{2/3}(a,b)\le L(a,b)$ for $a,b>0$ is equivalent to the Hilbert-Schmidt (Frobenius) norm inequality $\left\| K_{2/3}(S,T)X\right\|_2\le \left\| L(S,T)X\right\|_2$ for positive semi-definite matrices $S,T$ and arbitrary matrix $X$. 
 The  symbol $\left\| \cdot\right\|_2$ means the Hilbert-Schmidt (Frobenius) norm. 
 Here we denote the set of all $n\times n$ complex matrices by $M_n(\mathbb{C})$.
In order to give norm inequalities for means, we have to state the definition of a \lq\lq matrix mean'' $M(S,T)X$ for a given symmetric homogeneous mean $M(a,b)$ according to \cite[Eq.(1.1)]{HK1999India} (see also \cite[Chapter 3]{HK_book}). 
For positive semi-definite matrices $S$ and $T$, there exsits unitary matrices $U$ and $V$ such that
$S=U{\rm diag}(\lambda_1,\cdots,\lambda_n)U^*$ and $T=V{\rm diag}(\mu_1,\cdots,\mu_n)V^*$, respectively. Then we define a matrix mean $M(S,T)X$ for a given symmetric homogeneous mean $M(a,b)$, positive semi-definite matrices $S,T$ and any $X \in M_n(\mathbb{C})$ by
$$
M(S,T)X:=U\left(\left[M(\lambda_i,\mu_j)\right]_{i,j=1,\cdots,n}\circ \left(U^*XV\right)\right)V^*,
$$
where the symbol $\circ$ represents the Hadamard product. It is also written by
 $$
 M(S,T)X=\sum_{i=1}^n\sum_{j=1}^nM(\lambda_i,\mu_j)P_iXQ_j,
 $$
 where $S=\sum\limits_{i=1}^n \lambda_i P_i$ and $T=\sum\limits_{j=1}^n\mu_jQ_j$ are spectral decompositions. In the sequel, $S$ and $T$ represent positive semi-definite matrices and $X$ also does arbitrary matrix in $M_n(\mathbb{C})$, unless otherwise specified. If it is written
 by $M(a,b)=\sum\limits_{k=1}^l f_k(a)g_k(b)$ for functions $f_k$ and $g_k$, then we easily find the natural expressions as 
 $$
 A(S,T)X=\dfrac{SX+XT}{2},\,\,L(S,T)X=\int_0^1 S^vXT^{1-v}dv,\,\, G(S,T)X=S^{1/2}XT^{1/2}.
 $$ 
However, we do not find any explicit expression for $K_{r}(S,T)X$ since $K_{r}(a,b)$ can not be written by $\sum\limits_{k=1}^l f_k(a)g_k(b)$ for some functions $f_k$ and $g_k$.
 
From the inequality \eqref{fund_ineq}, we have the following norm inequality:
 \begin{equation}
 \hspace*{-0.3cm}\left\| S^{1/2}XT^{1/2} \right\|_2\le  \left\|K_{2/3}(S,T)X\right\|_2  \le \left\| \int_0^1 S^vXT^{1-v}dv \right\|_2
 \le \left\| H_{2/3}(S,T)X \right\|_2, \label{HS_norm_ineq01}
 \end{equation}
where the third inequality of \eqref{HS_norm_ineq01} has already been shown in \cite[Theorem 2]{Zou}, and the Heron mean is defined by $H_s:=H_s(a,b)=sG(a,b)+(1-s)A(a,b),\,\,(0\le s \le 1)$.

Since the Hilbert-Schmit norm is one of the unitarily invariant norm, we are interested whether the inequalities \eqref{HS_norm_ineq01} will be generalized to the unitarily invariant norm inequalities.
For this purpose, we have only to prove the relation such as $K_{2/3} \preceq  L$, then we obtain the unitarily invariant norm inequality $\vertiii{ K_{2/3}(S,T)X } \le \vertiii{ L(S,T)X }$ for positive semi-definite matrices $S,T\in M_n(\mathbb{C})$ and arbitrary matrix $X\in M_n(\mathbb{C})$.  The symbol $\vertiii{\cdot}$ means the unitarily invariant norm. Here $K_{2/3} \preceq  L$ means that the function $K_{2/3}(e^t,1)/L(e^t,1)$ is positive definite  on $\mathbb{R}$. This is equivalent to the matrix
\[{\left[ {\frac{{K_{2/3}\left( {{t_i},{t_j}} \right)}}{{L\left( {{t_i},{t_j}} \right)}}} \right]_{i,j = 1,2, \cdots ,n}}\]
is positive semi-definite for any $t_1,t_2,\cdots,t_n>0$ and $n\in\mathbb{N}$. See \cite{B2007,HK1999India,K2011} for details. In this section, we often use the following statement:
$$
\vertiii{N_1(S,T)X}\le \vertiii{N_2(S,T)X}\Longleftrightarrow \,\,{\rm condition \,\,(C)}
$$
for two symmetric homogeneous means $N_1$ and $N_2$.
This notation means that the condition (C) is a necessary and sufficient condition of the inequality for all unitarily invariant norm and all positive semi-definite matrices $S,T\in M_n(\mathbb{C})$ and any matrix $X\in M_n(\mathbb{C})$ for all $n\in \mathbb{N}$. We often use the simple notation as equivalently
$$
N_1\preceq N_2 \Longleftrightarrow \,\,{\rm condition \,\,(C)}.
$$ 

In Section \ref{sec2}, we proved the point-wise inequality $K_{2/3} \le L$ and the inequalities $L\le B_{1/3}\le H_{2/3}$ are known as stated in Section \ref{sec1}. As it was shown in \cite[Example 3.5]{HK1999India}, $L\preceq B_{1/3}$ does not hold in general in spite of $L\le B_{1/3}$. 
In Remark \ref{remark_3.5} below, we state on the relation $L\preceq H_{2/3}$.
In this section, we mainly study the properties and the inequalities for the bounds $K_r$ and $H_s$ of the logarithmic mean given in \eqref{HS_norm_ineq01}. We also study the comparisons of some means.
We start from the following inequalities.

\begin{proposition}\label{sec3_prop3.1}
For $0\le r \le 1$ and $S,T\ge 0$, we have
 \begin{equation}\label{UI_norm_ineq01}
 \vertiii{S^{1/2}XT^{1/2}} \le \vertiii{K_r(S,T)X} \le \frac{1}{2}\vertiii{SX+XT} 
 \end{equation}
\end{proposition}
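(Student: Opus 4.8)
The plan is to translate both inequalities in \eqref{UI_norm_ineq01} into positive-definiteness statements and then reduce everything to the infinite divisibility of a single elementary function. By the equivalence between unitarily invariant norm inequalities and positive definiteness of ratios of means recalled in this section (see also \cite{HK1999India,K2011,B2007}), the left inequality in \eqref{UI_norm_ineq01} is the relation $G\preceq K_r$, i.e.\ positive definiteness of $t\mapsto G(e^t,1)/K_r(e^t,1)$ on $\mathbb{R}$, and the right inequality is $K_r\preceq A$, i.e.\ positive definiteness of $t\mapsto K_r(e^t,1)/A(e^t,1)$ on $\mathbb{R}$. (The pointwise chain $G\le K_r\le A$, valid for $0\le r\le 1$ by the monotonicity of $r\mapsto K_r$ noted above, is by itself insufficient, just as in the case $L\le B_{1/3}$.)

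First I would compute the two ratios. Putting $x=e^t$ and using $K_r(x,1)=x^{r/2}\bigl((x+1)/2\bigr)^{1-r}$, one finds
\[
\frac{K_r(x,1)}{A(x,1)}=\left(\frac{2\sqrt{x}}{x+1}\right)^{r},\qquad \frac{G(x,1)}{K_r(x,1)}=\left(\frac{2\sqrt{x}}{x+1}\right)^{1-r},
\]
and since $\dfrac{2\sqrt{x}}{x+1}=\dfrac{2}{e^{t/2}+e^{-t/2}}=\dfrac{1}{\cosh(t/2)}$, both ratios have the form $\varphi(t)^{\alpha}$ with $\varphi(t)=1/\cosh(t/2)$ and exponent $\alpha=r$, respectively $\alpha=1-r$; in both cases $\alpha\in[0,1]$ because $0\le r\le 1$.

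Hence the proposition reduces to the assertion that $\varphi(t)=1/\cosh(t/2)$ is infinitely divisible as a positive definite function, i.e.\ that $\varphi^{\alpha}$ is positive definite on $\mathbb{R}$ for every $\alpha\ge 0$ (the case $\alpha=0$ being the constant $1$). This is the only genuinely non-formal step. One route is to invoke the classical fact that the hyperbolic secant characteristic function is infinitely divisible. Alternatively I would argue directly from the product expansion $\cosh(t/2)=\prod_{n\ge 0}\bigl(1+t^2/((2n+1)^2\pi^2)\bigr)$, so that $\varphi(t)^{\alpha}=\prod_{n\ge 0}\bigl(1+t^2/((2n+1)^2\pi^2)\bigr)^{-\alpha}$, and it suffices to show each factor $(1+t^2/c^2)^{-\alpha}$ is positive definite for $\alpha\ge 0$: by Schoenberg's theorem this is equivalent to $\log(1+t^2/c^2)$ being conditionally negative definite, which follows from Frullani's integral $\log(1+t^2/c^2)=\int_0^\infty s^{-1}e^{-c^2 s}\bigl(1-e^{-st^2}\bigr)\,ds$ together with the elementary fact that each $t\mapsto 1-e^{-st^2}$ is conditionally negative definite (being $1$ minus a Gaussian characteristic function). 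A locally uniformly convergent product of positive definite functions is positive definite, so $\varphi^{\alpha}$ is positive definite for all $\alpha\ge 0$; in particular $K_r(e^t,1)/A(e^t,1)=\varphi(t)^{r}$ and $G(e^t,1)/K_r(e^t,1)=\varphi(t)^{1-r}$ are positive definite, which yields $G\preceq K_r\preceq A$ and hence \eqref{UI_norm_ineq01}. The main obstacle is precisely this infinite divisibility of $1/\cosh(t/2)$; once it is in hand, everything else is routine use of the $\preceq$-formalism set up in this section.
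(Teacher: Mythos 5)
Your proposal is correct and follows essentially the same route as the paper: both reduce the two inequalities to the positive definiteness of $G(e^t,1)/K_r(e^t,1)=\left(1/\cosh(t/2)\right)^{1-r}$ and $K_r(e^t,1)/A(e^t,1)=\left(1/\cosh(t/2)\right)^{r}$ and then invoke the infinite divisibility of $1/\cosh$. The only difference is that the paper simply cites this infinite divisibility (from Kosaki and Bhatia--Kosaki), whereas you supply a correct self-contained derivation via the product expansion of $\cosh$, Frullani's integral and Schoenberg's theorem.
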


\begin{proof}
We compute
$$
\frac{G(e^t,1)}{K_r(e^t,1)}=\frac{e^{t/2}}{e^{rt/2}\left(\dfrac{e^t+1}{2}\right)^{1-r}}=\left(\frac{1}{\cosh t/2}\right)^{1-r}.
$$
 It is known that $\dfrac{1}{\cosh x}$ is infinitely divisible in  \cite[Lemma 2 (ii)]{K2014}. (See also \cite[Proposition 6]{BK2007}.)  A function $f$ is called  infinitely divisible if the function $\left(f(x)\right)^r$ for any $r\ge 0$ is positive definite \cite{BK2007}. Thus $\dfrac{G(e^t,1)}{K_r(e^t,1)}$ is positive definite for $0\le r \le 1$ so that we have the first inequality in \eqref{UI_norm_ineq01}.
 
 We also compute
$$
\frac{K_r(e^t,1)}{A(e^t,1)}=\frac{e^{t/2}\left(\cosh t/2\right)^{1-r}}{e^{t/2}\cosh t/2}=\left(\frac{1}{\cosh t/2}\right)^{r}.
$$
Thus  $\dfrac{K_r(e^t,1)}{A(e^t,1)}$ is positive definite for $0\le r \le 1$ so that we have the second inequality in \eqref{UI_norm_ineq01}.
 \end{proof}

The power difference mean (See \cite[Section 5]{HK_book}): 
 \[M_u:={M_u}\left( {a,b} \right) = \left\{ \begin{array}{l}
\dfrac{{u - 1}}{u} \times \dfrac{{{a^u} - {b^u}}}{{{a^{u - 1}} - {b^{u - 1}}}}\,\,\,\,\,\,\,\left( {a \ne b} \right)\\
\,\,\,\,\,\,a\,\,\,\,\,\,\,\,\,\,\,\,\,\,\,\,\,\,\,\,\,\,\,\,\,\,\,\,\,\,\,\,\,\,\,\,\,\,\,\,\,\,\,\,\,\,\,\,\,\,\,\,\,\left( {a = b} \right),
\end{array} \right.\]
interpolates $A,G,L$ and $H$. However, the mean $K_r$ interpolates $A,G$ and $H$ but does not do $L$. We have the relation $K_{r}\le L$ for $r \ge \dfrac{2}{3}$. We can not find $r\in(0,1)$ such that $L\le K_r$ since
we have 
\begin{eqnarray*}
\frac{L(x,1)}{K_r(x,1)}&=&\frac{x-1}{\log x}\times \frac{1}{x^{r/2}\left(\dfrac{x+1}{2}\right)^{1-r}}\ge \frac{1}{2}\times\frac{x}{\log x}\times \frac{1}{x^{r/2}x^{1-r}}\\
&=&\frac{\left(\sqrt{x}\right)^r}{\log \sqrt{x}}\longrightarrow \infty\,\,\,({\rm as}\,\,\,x\longrightarrow\infty).
\end{eqnarray*}
In the above inequality, we used the fact $x-1\ge \dfrac{x}{2}$ and $\dfrac{x+1}{2}\le x$ for a sufficiently large $x>0$.

In the following remarks, we compare the mean $K_r$ with the logarithmic mean $L$, the Heron mean $H_s$, the binomial mean $B_p$, the power difference mean $M_u$ and the Heinz mean: 
$$Hz_v:=Hz_v(a,b)=\dfrac{1}{2}\left(a^vb^{1-v}+a^{1-v}b^v\right),\,\,(0\le v \le 1).$$
\begin{remark}
\begin{itemize}
\item[(i)] Although we have the second inequality in \eqref{HS_norm_ineq01}, the inequality $\vertiii{K_{2/3}(S,T)X} \le \vertiii{L(S,T)X}$ does not hold. To show this, we compute
$$
\frac{K_{r}(e^{2t},1)}{L(e^{2t},1)}=\frac{t\left(\cosh t\right)^{1-r}}{\sinh t}=:\varphi_r(t)
$$
which is not positive definite for $r=2/3$  by the following example.
We calculate the eigenvalues
of the $3\times 3$ matrix ${\left[ {\varphi_{2/3}\left( {{t_i} - {t_j}} \right)} \right]_{i,j = 1,2,3}}$ with $t_1:=1,\,\,t_2:=2,\,\,t_3:=3$. This is same setting of \cite[Example 3.5]{HK1999India}. 
Three eigenvalues of the matrix 
$${\left[ {\varphi_{2/3}\left( {{t_i} - {t_j}} \right)} \right]_{i,j = 1,2,3}} 
\simeq \left( {\begin{array}{*{20}{c}}
{1}&{0.983295}&{0.857656}\\
{0.983295}&{1}&{0.983295}\\
{0.857656}&{0.983295}&{1}
\end{array}} \right)$$ 
are approximately obtained as $2.88404, 0.142344$ and $-0.026381$ by the numerical computations. 
Thus the function $\varphi_{2/3}(t)$ is not positive definite, that is $K_{2/3}\npreceq L$, which is the difference between the Hilbert-Schmidt norn inequality and the unitarily invariant norm inequality.
In addition,  one of the eigenvalues of the $5\times 5$ matrix  ${\left[ {\varphi_{0.9}\left( {{t_i} - {t_j}} \right)} \right]_{i,j = 1,\cdots,5}}$ with $t_i:=i$ for $i=1,\cdots,5$, takes the negative value by numerical computations.
Thus these numerical computations support the conjecture that the function $\varphi_r(t)$ for $r<1$, is not positive definite.

Since $\varphi_r(t)=\dfrac{t}{\sinh t}\times\left(\dfrac{1}{\cosh t}\right)^{r-1}$,
the function $\varphi_r(t)$ is positive definite if $1\le r \le 2$. Thus we have $\vertiii{K_r(S,T)}\le \vertiii{G(S,T)X} \le \vertiii{L(S,T)X}$ for $1\le r \le 2$, by $K_1=G$ and Proposition \ref{prop_3.4K} below.

\item[(ii)] We have
$$
\frac{K_r(e^{2t},1)}{H_s(e^{2t},1)}=\frac{1}{1-s}\times \frac{\cosh t}{\dfrac{s}{1-s}+\cosh t}\times \left(\frac{1}{\cosh t}\right)^r.
$$
Since $\dfrac{1}{\cosh t}$ is  infinitely divisible, the function  $\dfrac{K_r(e^t,1)}{H_s(e^t,1)}$ is positive definite if $\dfrac{s}{1-s}\le 0$ by \cite[Theorem 7.1]{K2011}. Since $0\le s \le 1$, we have  $K_r \preceq H_0=A$ for $ r \ge 0$. We also have
$$
1\le r \le 2 \,\,\,{\rm and}\,\,\, s\in[0,1/2]\cup \{1\} \,\,\,  \Longrightarrow K_r \preceq H_s,
$$
from 
$$
\frac{K_r(e^{2t},1)}{H_s(e^{2t},1)}=\frac{1}{1-s}\times \frac{1}{\dfrac{s}{1-s}+\cosh t}\times \left(\frac{1}{\cosh t}\right)^{r-1}
$$
and $\dfrac{1}{\beta+\cosh x}$ is positive definite if and only if $-1<\beta \le 1$ \cite[Eq.(7.1)]{K2011}.
\item[(iii)]
We have
$$ \frac{K_r(e^{2t},1)}{B_p(e^{2t},1)}=\frac{\left(\cosh t\right)^{1-r}}{\left(\cosh pt\right)^{1/p}}
=\frac{\cosh t}{\cosh pt}\left(\frac{1}{\cosh pt}\right)^{\frac1p-1}\left(\frac{1}{\cosh t}\right)^r.
$$
Since the function $\dfrac{\cosh ax}{\cosh bx}$ is positive definite if and only if $a \le b$,
the above function is positive definite if $p\ge 1$ and $\dfrac{1}{p}-1\ge 0$ and $r \geq 0$. Thus we have $K_r \preceq B_1=A$ for $ r \ge 0$. We also have $K_r \preceq B_p $ for $1\le r \le 2$ and $ p\ge 0$,
from
$$
\frac{K_r(e^{2t},1)}{B_p(e^{2t},1)}
=\left(\frac{1}{\cosh pt}\right)^{\frac1p}\left(\frac{1}{\cosh t}\right)^{r-1}\,\,\,{\rm with}\,\,\,
\lim_{p\to 0}\cosh^{1/p}(pt)=1.
$$
\item[(iv)] We have
\begin{eqnarray*}
&&\frac{K_r(e^{2t},1)}{M_u(e^{2t},1)}=\frac{u}{u-1}\times \left(\cosh t\right)^{1-r}\times \frac{\sinh (u-1)t}{\sinh ut}\\
&& =\frac{u}{2(u-1)}\times \left(\dfrac{1}{\cosh t}\right)^r\times \dfrac{\cosh t}{\cosh ut/2}\times \dfrac{\sinh (u-1)t}{\sinh ut/2},
\end{eqnarray*}
which is positive definite  if  $1\le \dfrac{u}{2}$ and $u-1\le \dfrac{u}{2}$ which is equivalent to $u=2$. Thus we have $K_r \preceq M_2=A$ for $0\le r \le 1$.
We also have $K_r \preceq M_u$ for $1\le r \le 2$ and $u\ge 1$
from
$$
\frac{K_r(e^{2t},1)}{M_u(e^{2t},1)}=\frac{u}{u-1}\times\left(\dfrac{1}{\cosh t}\right)^{r-1}\times \dfrac{\sinh (u-1)t}{\sinh ut}
$$
with $\lim\limits_{u\to 1}\dfrac{K_r(e^{2t},1)}{M_u(e^{2t},1)}=\left(\dfrac{1}{\cosh t}\right)^{r-1}\times \dfrac{t}{\sinh t}$.
\item[(v)] We have
\[\frac{{{K_r}\left( {{e^{2t}},1} \right)}}{{Hz_{v}\left( {{e^{2t}},1} \right)}} = \left\{ \begin{array}{l}
\frac{{\cosh t}}{{\cosh \left( {2v - 1} \right)t}} \times {\left( {\frac{1}{{\cosh t}}} \right)^r},\,\,\,\,\,\,\,\,\,\,\left( {\frac{1}{2} \le v \le 1} \right),\\
\frac{{\cosh t}}{{\cosh \left( {1 - 2v} \right)t}} \times {\left( {\frac{1}{{\cosh t}}} \right)^r},\,\,\,\,\,\,\,\,\,\,\left( {0 \le v \le \frac{1}{2}} \right).
\end{array} \right.\]
Since the function $\dfrac{\cosh ax}{\cosh bx}$ is positive definite if and only if $a \le b$, we have
$$
0\le r \le 1 \,\,\,{\rm and}\,\,\,v=\{0,1\} \Longrightarrow K_r \preceq Hz_{v} .
$$
We also have
$$
1\le r \le 2 \,\,\,{\rm and}\,\,\, 0\le v \le 1 \Longrightarrow K_r \preceq Hz_{v}
$$
from
\[\frac{{{K_r}\left( {{e^{2t}},1} \right)}}{{Hz_{v}\left( {{e^{2t}},1} \right)}} = \left\{ \begin{array}{l}
\frac{1}{{\cosh \left( {2v - 1} \right)t}} \times {\left( {\frac{1}{{\cosh t}}} \right)^{r - 1}},\,\,\,\,\,\,\,\,\,\,\left( {\frac{1}{2} \le v \le 1} \right),\\
\frac{1}{{\cosh \left( {1 - 2v} \right)t}} \times {\left( {\frac{1}{{\cosh t}}} \right)^{r - 1}},\,\,\,\,\,\,\,\,\,\,\left( {0 \le v \le \frac{1}{2}} \right).
\end{array} \right.\]
\end{itemize}
\end{remark}


It may be interesting to compare $K_r$ with the Lehmer mean:
$$
L_{\alpha}:=L_{\alpha}(a,b)=\dfrac{a^{\alpha}+b^{\alpha}}{a^{\alpha-1}+b^{\alpha-1}},\,\,\,{\rm }\,\,\,0\le \alpha \le 1.
$$
Although the Lehmer mean is often defined for $-\infty\le \alpha \le \infty$ in \cite[Chapter 8]{K2011} and \cite[Section 2.4]{BK2007}, we study it for $0\le \alpha \le 1$ since it is a symmetric homogeneous mean when
$0\le \alpha \le 1$ by $\dfrac{dL_{\alpha}(x,1)}{dx}=\dfrac{x^{\alpha}\left((1-\alpha)+\alpha x+x^{\alpha}\right)}{(x+x^{\alpha})^2}\ge 0$ for $0\le \alpha \le 1$ and $x>0$.
It is known that $L_0=H$, $L_{1/2}=G$ and $L_1=A$ so that the Lehmer mean $L_{\alpha}$ interpolates $H,G$ and $A$ as the mean $K_r$ so. Since $\dfrac{dL_{\alpha}(x,1)}{d\alpha}=\dfrac{x^{\alpha+1}(x-1)\log x}{(x^{\alpha}+x)^2}\geq 0$ for all $\alpha\in \mathbb{R}$ and 
$\dfrac{dK_r(x,1)}{dr}=\dfrac{x^{r/2}}{2}\left(\dfrac{x+1}{2}\right)^{1-r}\log\dfrac{4x}{(x+1)^2}\le 0$ for all $r\in \mathbb{R}$, we easily find that
$$
1\le r \le 2 \,\,\,{\rm and} \,\,\,  \frac{1}{2} \le \alpha \le 1 \Longrightarrow K_r \le L_{\alpha}
$$
and
$$
0\le r \le 1 \,\,\,{\rm and} \,\,\,0\le \alpha \le \frac{1}{2} \Longrightarrow K_r \ge L_{\alpha}.
$$
These are natural results since $K_2=L_0=H$, $K_1=L_{1/2}=G$ and $K_0=L_1=A$.
We give the stronger order between $K_r$ and $L_{\alpha}$ in the following.
\begin{proposition}\label{prop_Kr_Lehmer}
Let $\alpha,r \in \mathbb{R}$.
If $1\le r \le 2$ and $\frac{1}{2} \le \alpha \le 1$, then $K_r\preceq L_{\alpha}$.
If $0\le r \le 1$ and $0\le \alpha \le \frac{1}{2}$, then $L_{\alpha}\preceq K_r$.
\end{proposition}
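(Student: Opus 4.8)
The plan is to follow the template used throughout this section: reduce the assertion $K_r\preceq L_\alpha$ to the positive definiteness of the single‑variable function $t\mapsto K_r(e^{2t},1)/L_\alpha(e^{2t},1)$, put that ratio into closed form using hyperbolic functions, and recognize it as a product of functions already known to be positive definite.

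First I would record the two closed forms. A direct computation gives $K_r(e^{2t},1)=e^{rt}\left(\dfrac{e^{2t}+1}{2}\right)^{1-r}=e^{t}(\cosh t)^{1-r}$, exactly as in the earlier remarks. For the Lehmer mean, writing $e^{2\alpha t}+1=2e^{\alpha t}\cosh\alpha t$ and $e^{2(\alpha-1)t}+1=2e^{(\alpha-1)t}\cosh(\alpha-1)t$, and using that $\cosh$ is even, yields $L_\alpha(e^{2t},1)=e^{t}\,\dfrac{\cosh\alpha t}{\cosh(1-\alpha)t}$. Hence
$$
\frac{K_r(e^{2t},1)}{L_\alpha(e^{2t},1)}=(\cosh t)^{1-r}\cdot\frac{\cosh(1-\alpha)t}{\cosh\alpha t},\qquad
\frac{L_\alpha(e^{2t},1)}{K_r(e^{2t},1)}=(\cosh t)^{r-1}\cdot\frac{\cosh\alpha t}{\cosh(1-\alpha)t}.
$$

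Next I would invoke the two standard ingredients already used in the remarks above: $1/\cosh t$ is infinitely divisible, so $(\cosh t)^{-\gamma}$ is positive definite for every $\gamma\ge 0$; and $\cosh(at)/\cosh(bt)$ is positive definite precisely when $a\le b$. For $1\le r\le 2$ the factor $(\cosh t)^{1-r}=(1/\cosh t)^{r-1}$ is positive definite, and for $\tfrac12\le\alpha\le 1$ we have $1-\alpha\le\alpha$, so $\cosh(1-\alpha)t/\cosh\alpha t$ is positive definite; since the Hadamard (Schur) product of positive definite functions is positive definite, the first displayed ratio is positive definite, i.e. $K_r\preceq L_\alpha$. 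For $0\le r\le 1$ and $0\le\alpha\le\tfrac12$ the same argument applied to the second ratio — now $(\cosh t)^{r-1}=(1/\cosh t)^{1-r}$ and $\alpha\le 1-\alpha$ — gives $L_\alpha\preceq K_r$. The upper constraint $r\le 2$ (resp. the lower one $r\ge 0$) is needed only so that $K_r$ remains a symmetric homogeneous mean.

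There is no serious obstacle here; the only points requiring care are the algebraic simplification of $L_\alpha(e^{2t},1)$ into a ratio of hyperbolic cosines and the bookkeeping of the parameter ranges, so that the exponent $|1-r|$ and the comparison between $\alpha$ and $1-\alpha$ land on the correct side. As a sanity check I would note the limiting cases $\alpha=\tfrac12$ (which recovers $K_r\preceq G$ for $r\ge 1$, as at the end of Remark~(i)) and $\alpha=1$ (which recovers $K_r\preceq A$ for $r\ge 0$, as in Remark~(ii)), both consistent with the statements already obtained above.
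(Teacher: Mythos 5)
Your proposal is correct and follows essentially the same route as the paper: the paper likewise reduces to the positive definiteness of $K_r(e^{2t},1)/L_\alpha(e^{2t},1)=\dfrac{\cosh(1-\alpha)t}{\cosh\alpha t}\cdot\left(\dfrac{1}{\cosh t}\right)^{r-1}$ and concludes from the positive definiteness of $\cosh at/\cosh bt$ for $0\le a\le b$ together with the infinite divisibility of $1/\cosh t$. Your write-up merely makes explicit the intermediate closed forms and the symmetric argument for the second assertion, which the paper dispatches with ``follows similarly.''
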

\begin{proof}
We use the well known fact \cite[Eq.(1.5)]{HK1999India} that $\dfrac{\cosh ax}{\cosh bx}$ is positive definite if $b \ge a \ge 0$. We compute
$$
\dfrac{K_r(e^{2t},1)}{L_{\alpha}(e^{2t},1)}=\dfrac{\cosh(\alpha-1)t}{\cosh \alpha t}\times \left(\frac{1}{\cosh t}\right)^{r-1}=\dfrac{\cosh(1-\alpha)t}{\cosh \alpha t}\times\left(\frac{1}{\cosh t}\right)^{r-1}.
$$
Thus we have our first assertion. The second assertion follows similarly.
\end{proof}

We study here two bounds of the logarithmic mean $L$. That is, we consider the monotonicity of the parameters $r\in [0,1]$ and $s\in [0,1]$ for the mean $\vertiii{K_r(S,T)X}$ and the Heron mean $\vertiii{H_s(S,T)X}$, respectively.
It is easy to see $f(r):=\vert|K_r(S,T)X\vert|_2$   is monotone decrasing for $r\in [0,2]$.
Then we also have the following result for the unitarily invariant norm.
\begin{proposition}\label{prop_3.4K}
Let $r,r'\in [0,1]$. Then we have
$$
\vertiii{K_r(S,T)X}\le \vertiii{K_{r'}(S,T)X} \Longleftrightarrow r\ge r'.
$$
\end{proposition}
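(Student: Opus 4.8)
The plan is to convert the unitarily invariant norm inequality into the positive--definiteness statement $K_r \preceq K_{r'}$, exactly in the framework recalled earlier in this section: by the Hiai--Kosaki equivalence (\cite{HK1999India}; see also \cite{B2007,K2011}), the inequality $\vertiii{K_r(S,T)X}\le\vertiii{K_{r'}(S,T)X}$ holds for every unitarily invariant norm, all positive semi-definite $S,T\in M_n(\mathbb{C})$, all $X\in M_n(\mathbb{C})$ and all $n\in\mathbb{N}$ if and only if the function $t\mapsto K_r(e^t,1)/K_{r'}(e^t,1)$ is positive definite on $\mathbb{R}$. So it suffices to decide for which pairs $r,r'\in[0,1]$ this holds. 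Using $\dfrac{e^t+1}{2}=e^{t/2}\cosh(t/2)$ one computes $K_r(e^t,1)=e^{t/2}\bigl(\cosh(t/2)\bigr)^{1-r}$, and hence
\[
\frac{K_r(e^t,1)}{K_{r'}(e^t,1)}=\bigl(\cosh(t/2)\bigr)^{r'-r}=\left(\frac{1}{\cosh(t/2)}\right)^{r-r'};
\]
equivalently this is immediate from the identity $G(e^t,1)/K_r(e^t,1)=(1/\cosh(t/2))^{1-r}$ already used in the proof of Proposition~\ref{sec3_prop3.1}.

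For the direction $r\ge r'$: since $r-r'\ge 0$ and $1/\cosh$ is infinitely divisible (\cite[Lemma~2~(ii)]{K2014}), the function $(1/\cosh(t/2))^{r-r'}$ is positive definite; thus $K_r\preceq K_{r'}$ and the norm inequality follows.

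For the converse: if $r<r'$, the ratio above equals $(\cosh(t/2))^{r'-r}$ with $r'-r>0$, which behaves like $e^{(r'-r)|t|/2}$ as $|t|\to\infty$ and is therefore unbounded. A positive definite function $f$ on $\mathbb{R}$ must satisfy $|f(t)|\le f(0)$, so this ratio is not positive definite; hence $K_r\npreceq K_{r'}$ and the norm inequality fails — indeed it already fails for $\|\cdot\|_2$, consistent with the stated monotonicity of $r\mapsto\|K_r(S,T)X\|_2$ on $[0,2]$. Combining the two directions gives the claimed equivalence.

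There is no serious obstacle here: the whole argument rests on the elementary identity $\frac{e^t+1}{2}=e^{t/2}\cosh(t/2)$ together with the already cited infinite divisibility of $1/\cosh$. The one point deserving care is the converse implication, where I must invoke the full ``if and only if'' form of the Hiai--Kosaki equivalence — so that failure of positive definiteness of the ratio genuinely produces a counterexample to the norm inequality for some unitarily invariant norm — and then use the boundedness of positive definite functions to rule it out.
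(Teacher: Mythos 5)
Your proof is correct and follows essentially the same route as the paper: reduce to positive definiteness of $K_r(e^t,1)/K_{r'}(e^t,1)=\left(1/\cosh(t/2)\right)^{r-r'}$ and invoke the infinite divisibility of $1/\cosh$. Your explicit treatment of the converse via the bound $|f(t)|\le f(0)$ for positive definite functions is a detail the paper leaves implicit (though it uses the same argument elsewhere, in Proposition~\ref{prop_3.7}), and it is entirely sound.
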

\begin{proof}
The inequality $\vertiii{K_r(S,T)X}\le \vertiii{K_{r'}(S,T)X}$ is equivalent to the positive definiteness of the following function
$$
\frac{K_r(e^{2t},1)}{K_{r'}(e^{2t},1)}=\left(\dfrac{1}{\cosh t}\right)^{r-r'}
$$
which is positive definite if and only if $r\ge r'$, since  the function $\dfrac{1}{\cosh x}$ is infinitely divisible.
\end{proof}
 Lemma \ref{le2.1} and  Proposition \ref{prop_2.4} clearly state that we have
$$
\vert|L(S,T)X\vert|_2\le \vert|H_s(S,T)X\vert|_2 \Longleftrightarrow 0\le s \le \frac{2}{3}.
$$
We also have that $f(s):=\vert|H_s(S,T)X\vert|_2$ is monotone decrasing for $s\in [0,1]$.

In addition, we have the following result for the unitarily invariant norm.
\begin{proposition}\label{prop_3.4}
Let $s,s'\in [0,1]$. Then we have
$$
\vertiii{H_s(S,T)X}\le \vertiii{H_{s'}(S,T)X} \Longleftrightarrow s\ge s'\quad {\rm and}\quad s'\le 1/2.
$$
\end{proposition}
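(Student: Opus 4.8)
The plan is to reduce, as throughout Section~\ref{sec3}, to the positive definiteness of a single function. By the framework recalled there, $\vertiii{H_s(S,T)X}\le\vertiii{H_{s'}(S,T)X}$ for all unitarily invariant norms, all $S,T\ge 0$ and all $X$ is equivalent to $H_s\preceq H_{s'}$, i.e.\ to the positive definiteness on $\mathbb{R}$ of $\psi(t):=H_s(e^{2t},1)/H_{s'}(e^{2t},1)$. Since $H_s(e^{2t},1)=se^{t}+(1-s)\tfrac{e^{2t}+1}{2}=e^{t}\bigl(s+(1-s)\cosh t\bigr)$, the exponential factors cancel and
$$
\psi(t)=\frac{s+(1-s)\cosh t}{s'+(1-s')\cosh t}.
$$
If $s=s'$ then $\psi\equiv 1$ and the asserted inequality is a trivial equality, so I would assume $s\neq s'$. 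For $s'<1$ a one‑line partial‑fraction computation gives the key decomposition
$$
\psi(t)=\frac{1-s}{1-s'}+\frac{s-s'}{(1-s')^{2}}\cdot\frac{1}{\cosh t+\beta'},\qquad \beta':=\frac{s'}{1-s'}.
$$

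For the sufficiency ($\Leftarrow$): assume $s>s'$ and $s'\le 1/2$, so $\beta'\in[0,1]$. Then $1/(\cosh t+\beta')$ is positive definite by the characterization that $1/(\beta+\cosh x)$ is positive definite iff $-1<\beta\le 1$ (used in the paper, \cite[Eq.(7.1)]{K2011}); the constant $\tfrac{1-s}{1-s'}\ge 0$ is positive definite and $\tfrac{s-s'}{(1-s')^{2}}>0$, so $\psi$ is a conic combination of positive definite functions and hence positive definite. For the necessity ($\Rightarrow$): assume $\psi$ positive definite. First, $\psi$ is continuous and a continuous positive definite function is bounded by $\psi(0)=1$; if $s'<1$ then $\psi(t)\to(1-s)/(1-s')$ as $t\to\infty$, forcing $(1-s)/(1-s')\le 1$, i.e.\ $s\ge s'$ (and if $s'=1$ then $\psi(t)=s+(1-s)\cosh t$ is bounded only for $s=1$, the case we excluded). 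Next, to get $s'\le 1/2$, I would invoke the elementary fact that a continuous positive definite $\phi$ with $\phi(t)\to c$ as $t\to\infty$ satisfies $\sum_{i,j}z_i\bar z_j\,\phi(t_i-t_j)\ge c\,\bigl|\sum_i z_i\bigr|^{2}$ — obtained by testing positive definiteness of $\phi$ on the doubled node set $\{t_i\}\cup\{t_i+T\}$ with weights $\{z_i\}\cup\{-z_i\}$ and letting $T\to\infty$ — which says precisely that $\phi-c$ is positive definite. Applying this with $\phi=\psi$ and $c=(1-s)/(1-s')$ makes $\psi-c=\tfrac{s-s'}{(1-s')^{2}}\cdot\tfrac{1}{\cosh t+\beta'}$ positive definite; since $s-s'>0$ (by the previous step), $1/(\cosh t+\beta')$ is positive definite, hence $\beta'\le 1$, i.e.\ $s'\le 1/2$.

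The routine part is the algebra and the conic‑combination argument for sufficiency; the one delicate point is the necessity of $s'\le 1/2$, and there the obstacle is to justify cleanly the ``subtract the limit at infinity'' step. I would do this by the doubling‑of‑nodes trick above; alternatively one may appeal to Bochner's theorem and observe that the Cesàro mean of $\psi$ equals the mass at the origin of its spectral measure, which must then equal $c$, so that $\psi-c$ is again the transform of a nonnegative measure. Finally, the degenerate case $s=s'$ should be flagged explicitly, the asserted condition being understood as the trivial equality of the two norms.
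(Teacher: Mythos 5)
Your proof is correct, and it reaches the same reduction as the paper — both arguments begin by observing that the norm inequality is equivalent to the positive definiteness of $H_s(e^{2t},1)/H_{s'}(e^{2t},1)=\dfrac{s+(1-s)\cosh t}{s'+(1-s')\cosh t}$ — but from that point the routes genuinely diverge. The paper simply quotes Kosaki's Proposition 7.3~(i) of \cite{K2011}, which gives the positive definiteness of $\dfrac{\alpha+\cosh t}{\beta+\cosh t}$ exactly when $\alpha\ge\beta$ and $\beta\le 1$ (for $\alpha,\beta\ge 0$), and translates the conditions into $s\ge s'$, $s'\le 1/2$. You instead rederive that criterion in the case at hand from more primitive ingredients: the partial--fraction identity $\psi(t)=\frac{1-s}{1-s'}+\frac{s-s'}{(1-s')^{2}}\cdot\frac{1}{\cosh t+\beta'}$ (which I checked and is correct), the fact that $\frac{1}{\beta+\cosh t}$ is positive definite iff $-1<\beta\le 1$ (already used elsewhere in the paper via \cite[Eq.(7.1)]{K2011}), the bound $|\phi(t)|\le\phi(0)$ for positive definite $\phi$, and the node--doubling argument showing that $\phi-\lim_{|t|\to\infty}\phi(t)$ is again positive definite. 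All of these steps are sound (the doubling computation does yield $\sum_{i,j}z_i\bar z_j\phi(t_i-t_j)\ge c\,\bigl|\sum_i z_i\bigr|^2$ in the limit, and $\psi$ is even so the two-sided limit exists). What your approach buys is self-containedness — the only external input is the elementary characterization of $\frac{1}{\beta+\cosh t}$ rather than the full strength of Kosaki's two-parameter result — at the cost of being longer. You are also right to flag the degenerate case $s=s'>1/2$, where the two norms coincide yet the stated condition fails; the paper's formulation silently excludes it, so the equivalence should properly be read for $s\neq s'$ (or with the trivial equality understood), and your explicit treatment of it is an improvement rather than a defect.
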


\begin{proof}
The inequality 
$$\vertiii{H_s(S,T)X}\le \vertiii{H_{s'}(S,T)X}$$
 is equivalent to
 the positive definiteness of the following function
$$
\frac{H_s(e^{2t},1)}{H_{s'}(e^{2t},1)}
=\frac{1-s}{1-s'}\times \frac{\dfrac{s}{1-s}+\cosh t}{\dfrac{s'}{1-s'}+\cosh t},
$$
which is positive definite \cite[Proposition 7.3 (i)]{K2011} if and only if 
\begin{equation}\label{cond_prop3.4}
\dfrac{s}{1-s}\ge \dfrac{s'}{1-s'}\quad {\rm and} \quad \dfrac{s'}{1-s'}\le 1
\end{equation}
 since $\dfrac{s}{1-s}, \dfrac{s'}{1-s'} \ge 0$ from the assumption $s,s'\in [0,1]$. The conditions \eqref{cond_prop3.4} are equivalent to $s\ge s'$ and $s'\le 1/2$.
\end{proof}

\begin{theorem}\label{prop_opt}
Let $s\in [0,1]$. Then, 
$$
\vertiii{L(S,T)X} \le \vertiii{H_s(S,T)X} \Longleftrightarrow 0\le s \le \frac{1}{2}.
$$
\end{theorem}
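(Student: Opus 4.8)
The plan is to use, as throughout this section, the translation between unitarily invariant norm inequalities and positive definiteness: the inequality $\vertiii{L(S,T)X}\le\vertiii{H_s(S,T)X}$ (for every unitarily invariant norm, all positive semi-definite $S,T$ and all $X$, for all sizes) is equivalent to $L\preceq H_s$, i.e. to positive definiteness of $L(e^{2t},1)/H_s(e^{2t},1)$ on $\mathbb R$. Since $L(e^{2t},1)=e^{t}\dfrac{\sinh t}{t}$, $G(e^{2t},1)=e^{t}$ and $H_s(e^{2t},1)=e^{t}\bigl(s+(1-s)\cosh t\bigr)$, this ratio is $\dfrac{\sinh t}{t\,(s+(1-s)\cosh t)}$, so the statement reduces to determining for which $s\in[0,1]$ this function is positive definite.

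For the implication $0\le s\le\frac12\ \Rightarrow\ L\preceq H_s$ I would route everything through $s=\frac12$. At $s=\frac12$ one has $s+(1-s)\cosh t=\cosh^{2}(t/2)$, hence
$$
\frac{L(e^{2t},1)}{H_{1/2}(e^{2t},1)}=\frac{\sinh t}{t\cosh^{2}(t/2)}=\frac{2\sinh(t/2)\cosh(t/2)}{t\cosh^{2}(t/2)}=\frac{\tanh(t/2)}{t/2},
$$
and $\dfrac{\tanh u}{u}=\dfrac1{\cosh u}\cdot\dfrac{\sinh u}{u}=\int_0^1\dfrac{\cosh(vu)}{\cosh u}\,dv$ is positive definite, being an average over $v\in[0,1]$ of the functions $\cosh(vu)/\cosh u$, each positive definite because $\cosh(ax)/\cosh(bx)$ is positive definite whenever $0\le a\le b$ (\cite[Eq.(1.5)]{HK1999India}). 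Thus $L\preceq H_{1/2}$. For general $0\le s\le\frac12$, Proposition~\ref{prop_3.4} gives $H_{1/2}\preceq H_s$, and since a pointwise product of positive definite functions is positive definite, the factorization $\dfrac{L(e^{2t},1)}{H_s(e^{2t},1)}=\dfrac{L(e^{2t},1)}{H_{1/2}(e^{2t},1)}\cdot\dfrac{H_{1/2}(e^{2t},1)}{H_s(e^{2t},1)}$ yields $L\preceq H_s$.

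For the converse I would argue by contraposition, using that $G\preceq L$, equivalently positive definiteness of $\dfrac{G(e^{2t},1)}{L(e^{2t},1)}=\dfrac{t}{\sinh t}$ (already invoked in this section; see also \cite{K2011}). If $L\preceq H_s$ held, multiplying the positive definite function $L(e^{2t},1)/H_s(e^{2t},1)$ by the positive definite function $t/\sinh t$ would force $\dfrac{G(e^{2t},1)}{H_s(e^{2t},1)}=\dfrac1{s+(1-s)\cosh t}$ to be positive definite. For $s<1$ this equals $\dfrac1{1-s}\cdot\dfrac1{\beta+\cosh t}$ with $\beta=\dfrac{s}{1-s}\ge0$, which is positive definite only for $\beta\le1$, i.e. $s\le\frac12$ (\cite[Eq.(7.1)]{K2011}); for $s=1$ one sees directly that $\dfrac{L(e^{2t},1)}{H_1(e^{2t},1)}=\dfrac{\sinh t}{t}$ is unbounded, hence not positive definite. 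So $L\preceq H_s$ forces $0\le s\le\frac12$, which closes the equivalence.

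I expect the boundary case $s=\frac12$ to be the crux: one must recognize $L/H_{1/2}$ as $(\tanh u)/u$ and invoke the $\cosh(ax)/\cosh(bx)$ criterion, after which Proposition~\ref{prop_3.4} together with multiplicativity of positive definiteness disposes of $s<\frac12$, and the $t/\sinh t$ trick disposes of necessity. A more direct attack on $\dfrac{\sinh t}{t(\beta+\cosh t)}$ for $\beta\in[0,1]$ via the representation $\int_0^1\dfrac{\cosh(ut)}{\beta+\cosh t}\,du$ looks less convenient, since the integrand is not positive definite for $u$ close to $1$; so the factorization through $H_{1/2}$ is the preferable route.
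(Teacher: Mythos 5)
Your proof is correct and follows essentially the same route as the paper: sufficiency is reduced to the case $s=\tfrac{1}{2}$ via Proposition~\ref{prop_3.4}, and necessity is obtained by multiplying $L(e^{2t},1)/H_s(e^{2t},1)$ by the positive definite function $t/\sinh t$ and invoking the criterion for $1/(\beta+\cosh t)$. The only difference is that where the paper cites the known relation $L\preceq H_{1/2}$ from the literature, you prove it directly by identifying $L/H_{1/2}$ with $\tanh(t/2)/(t/2)=\int_0^1\cosh(vt/2)/\cosh(t/2)\,dv$, which is a valid argument (and your separate treatment of $s=1$ in the necessity step is slightly more careful than the paper's, since the substitution $\beta=s/(1-s)$ degenerates there).
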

\begin{proof}
Proposition \ref{prop_3.4} and the known inequality \cite[Example 3.5]{HK1999India}, (See also \cite[Corollary 4]{B2006} and \cite[Theorem 3.9]{KS2014}) $\vertiii{L(S,T)X} \le \vertiii{H_{1/2}(S,T)X}$  state that we have
$$
0\le s \le \frac{1}{2}  \Longrightarrow  \vertiii{L(S,T)X} \le \vertiii{H_s(S,T)X}.
$$

Conversely, we assume $\vertiii{L(S,T)X} \le \vertiii{H_s(S,T)X}$ which is equivalent to the positive definiteness of the following function 
$$
\frac{L(e^{2t},1)}{H_s(e^{2t},1)}=\frac{\sinh t}{t}\times \frac{1}{1-s}\times \frac{1}{\dfrac{s}{1-s}+\cosh t}=:\phi_1(t).
$$
Since $\phi_2(t):=\dfrac{t}{\sinh t}$ is positive definite, the product of the functions $\phi_1(t)\phi_2(t)=\dfrac{1}{\dfrac{s}{1-s}+\cosh t}$ has to be positive definite. Since the function $\dfrac{1}{\cosh x+\beta}$ is positive definite if and only if $-1<\beta \le 1$ \cite[Eq.(7.1)]{K2011}, we have $0\le s \le \dfrac{1}{2}$ under the assumption $s\in [0,1]$. Thus we obtain
$$
\vertiii{L(S,T)X} \le \vertiii{H_s(S,T)X} \Longrightarrow 0\le s \le \frac{1}{2}.
$$
\end{proof}

We state the other upper bound of the logarithmic mean. According to Lemma \ref{le2.1}, we have the relation $L\le B_{1/3}\le H_{2/3}$. We compare the means $L$ and $B_p$ in unitarily invariant norm. By the monotonicity of the binomial mean \cite[Theorem 9]{K2014} that $B_{p'}\preceq B_p$ for $p'\le p $, we easily see $G=B_0\preceq B_p$ for $p\geq 0$.
\begin{proposition}\label{prop_3.7}
If $p\ge \dfrac{1}{2}$, then $\vertiii{L(S,T)X}\le \vertiii{B_p(S,T)X}$. We also have
$$
p\le 0 \Longleftrightarrow \vertiii{L(S,T)X}\ge \vertiii{B_p(S,T)X}.
$$
\end{proposition}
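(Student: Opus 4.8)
The plan is to reduce both inequalities, via the equivalence recalled in Section~\ref{sec3} between a unitarily invariant norm inequality and positive definiteness of the associated ratio of means, to statements about positive definite functions of $t\in\mathbb{R}$, and then to read these off from explicit $\cosh/\sinh$ expressions together with the infinite divisibility of $1/\cosh$.

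For the first assertion I would exploit that the binomial mean at $p=\frac12$ is a Heron mean: $B_{1/2}(a,b)=\left(\frac{\sqrt a+\sqrt b}{2}\right)^2=\frac12 G(a,b)+\frac12 A(a,b)=H_{1/2}(a,b)$. Hence Theorem~\ref{prop_opt} (with $s=\frac12$) yields $L\preceq B_{1/2}$, and the cited monotonicity of the binomial mean \cite[Theorem 9]{K2014} gives $B_{1/2}\preceq B_p$ for $p\ge\frac12$. Since a product of positive definite functions is positive definite, $\preceq$ is transitive, so $L\preceq B_p$ for $p\ge\frac12$, which is the claim. A direct verification that $\frac{L(e^{2t},1)}{B_p(e^{2t},1)}=\frac{\sinh t}{t\,(\cosh pt)^{1/p}}$ is positive definite seems harder, so I would prefer the route through $B_{1/2}$.

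For the second assertion I would first compute, using $L(e^{2t},1)=e^t\frac{\sinh t}{t}$ and $B_p(e^{2t},1)=e^t(\cosh pt)^{1/p}$, that $\frac{B_p(e^{2t},1)}{L(e^{2t},1)}=\frac{t}{\sinh t}(\cosh pt)^{1/p}$. For $p<0$ one has $(\cosh pt)^{1/p}=\left(\frac{1}{\cosh|p|t}\right)^{1/|p|}$, which is positive definite by infinite divisibility of $1/\cosh$, and $\frac{t}{\sinh t}$ is positive definite, so the product is; for $p=0$ one has $B_0=G$ and the ratio is just $\frac{t}{\sinh t}$. In either case $B_p\preceq L$, i.e. $\vertiii{B_p(S,T)X}\le\vertiii{L(S,T)X}$, proving the implication $p\le 0\Rightarrow\vertiii{L(S,T)X}\ge\vertiii{B_p(S,T)X}$.

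Finally, for the converse I would argue by boundedness. If $\vertiii{L(S,T)X}\ge\vertiii{B_p(S,T)X}$ holds for all unitarily invariant norms and all positive semi-definite $S,T$ and all matrices $X$, then $B_p\preceq L$, so $\frac{t}{\sinh t}(\cosh pt)^{1/p}$ is positive definite, hence bounded above by its value $1$ at $t=0$. But for $p>0$, as $t\to+\infty$ one has $\cosh pt\sim\frac12 e^{pt}$, so $(\cosh pt)^{1/p}\sim 2^{-1/p}e^t$ while $\frac{t}{\sinh t}\sim 2t e^{-t}$, whence $\frac{t}{\sinh t}(\cosh pt)^{1/p}\sim 2^{1-1/p}t\to\infty$, a contradiction; therefore $p\le 0$. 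This last growth estimate is the only step that is not a purely formal translation, and it is exactly what excludes the range $p>0$ (in particular $0<p<\frac12$, on which the first assertion is silent); the main obstacle, such as it is, is spotting the identity $B_{1/2}=H_{1/2}$ so as to invoke Theorem~\ref{prop_opt}.
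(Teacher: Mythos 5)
Your argument is correct, and its overall skeleton coincides with the paper's: establish $L\preceq B_{1/2}$, propagate to all $p\ge \frac12$ via the monotonicity $B_{1/2}\preceq B_p$ from \cite[Theorem 9]{K2014}, and for the equivalence with $p\le 0$ combine the positive definiteness of $\eta_p(t)=\frac{t}{\sinh t}\left(\cosh pt\right)^{1/p}$ for $p\le 0$ (infinite divisibility of $1/\cosh$ plus positive definiteness of $t/\sinh t$) with the observation that for $p>0$ the function $\eta_p$ is unbounded, contradicting the bound $|f(t)|\le f(0)$ satisfied by positive definite functions. The one genuinely different step is the base case $L\preceq B_{1/2}$: you spot the identity $B_{1/2}=H_{1/2}$ and invoke Theorem \ref{prop_opt} (whose forward direction rests on the cited known inequality $L\preceq H_{1/2}$ together with Proposition \ref{prop_3.4}), whereas the paper verifies directly that
\[
\frac{L(e^{2t},1)}{B_{1/2}(e^{2t},1)}=\frac{\sinh(t/2)}{(t/2)\cosh(t/2)}
\]
is positive definite by appealing to the infinite divisibility of $\frac{\sinh x}{x(\beta+\cosh x)}$ for $-1<\beta\le 1$ \cite[Remark 4 (iii)]{K2008}. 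Both routes rest on essentially the same underlying fact about $L\preceq H_{1/2}=B_{1/2}$; yours is more economical given the results already established in the paper (and there is no circularity, since Theorem \ref{prop_opt} does not depend on this proposition), while the paper's direct computation keeps the statement self-contained at the level of positive definite functions. Your explicit asymptotics $\eta_p(t)\sim 2^{1-1/p}t\to\infty$ for $p>0$ is a correct quantitative version of the paper's limit argument, and it rightly covers the whole range $0<p<\frac12$ left open by the first assertion.
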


\begin{proof}
We compute
$$
\frac{L(e^{2t},1)}{B_p(e^{2t},1)}=\frac{\sinh t}{t\left(\cosh pt\right)^{1/p}}=\frac{\sinh t/2 \cosh t/2}{t/2\left(\cosh pt\right)^{1/p}}.
$$
It is easy to see the above ratio is positive definite for $p=1/2$, since
the function $\dfrac{\sinh x}{x\left(\beta +\cosh x\right)},\,\,(-1<\beta \le 1)$ is infinitely divisible \cite[Remark 4 (iii)]{K2008}. Thus we have 
$$\vertiii{L(S,T)X}\le \vertiii{B_{1/2}(S,T)X}.$$

In addition, it is known the monotonicity of the binomial mean \cite[Theorem 9]{K2014} that  for $p'\le p$,
$$
\frac{B_{p'}(e^{2t},1)}{B_{p}(e^{2t},1)}=\frac{\left(\cosh p't\right)^{1/p'}}{\left(\cosh pt\right)^{1/p}}
$$
is infinitely divisible (namely positive definite) so that we have
$$
p'\le p \Longrightarrow \vertiii{B_{p'}(S,T)X}\le \vertiii{B_{p}(S,T)X}.
$$
Therefore we have $\vertiii{L(S,T)X}\le \vertiii{B_p(S,T)X}$ if $p\ge \dfrac{1}{2}$.

The function
$$
\dfrac{B_p(e^{2t},1)}{L(e^{2t},1)}=\dfrac{t}{\sinh t}\times\left(\dfrac{1}{\cosh pt}\right)^{-1/p}=:\eta_p(t)
$$ 
is positive definite if $p\le 0$. In addition,  for $p>0$ we have $\lim\limits_{t\to\infty}\eta_p(t)=\infty$ and $\lim\limits_{t\to 0}\eta_p(t)=1$. In general, a positive definite function $f$ satisfies the property $|f(t)|\le f(0)$, \cite[Eq.(5.6)]{B2007}. However the function $\eta_p(t)$ does not satisfy this condition so that the function $\eta_p(t)$ is not positive definite for $p>0$. Thus we have the second statement.
\end{proof}

\begin{remark}\label{remark_3.5}
\begin{itemize}
\item[(i)] From Proposition \ref{prop_3.7}, we are interested in the relation $L\preceq B_p$ for the case $0<p<\dfrac{1}{2}$. We have $L\npreceq B_p$ for $0\le p \le \dfrac{1}{3}$ by the fact $L\npreceq B_{1/3}$ which was stated before Proposition \ref{sec3_prop3.1} (\cite[Example 3.5]{HK1999India}) and the monotonicity of the parameter $p\in \mathbb{R}$ in the binomial mean $\vertiii{B_p(S,T)X}$, \cite[Corollary 11]{K2014}. In addition, one of the eigenvalues of the $7\times 7$ matrix  ${\left[ {\eta_{3/7}\left( {{t_i} - {t_j}} \right)} \right]_{i,j = 1,\cdots,7}}$ with $t_i:=i$ for $i=1,\cdots,7$, takes the negative value by numerical computations. To find a counter-example of $L\preceq B_p$ for the value close to $p<1/2$, we need to prepare the large size matrix and take much time to compute numerically its eigenvalues.

\item[(ii)] As we stated in (i) above, $L\npreceq B_{1/3}$ and also $L\npreceq H_{2/3}$ from Theorem \ref{prop_opt},
 while we have the inequalities $L\le B_{1/3}\le  H_{2/3}$. Here we show the relation $B_{1/3}\npreceq  H_{2/3}$.
We assume that the function 
$$
\dfrac{B_{p}(e^{2t},1)}{H_{2/3}(e^{2t},1)}= \frac{3\left(\cosh pt\right)^{1/p}}{2+\cosh t}
$$
is positive definite. By multiplying the positive definite function $\left(\dfrac{1}{\cosh pt}\right)^{1/p}$, $(p>0)$ to $\dfrac{B_{p}(e^{2t},1)}{H_{2/3}(e^{2t},1)}$, the function  $\dfrac{1}{2+\cosh t}$ has to be positive definite. This contradicts the fact that the function $\dfrac{1}{\beta+\cosh t}$ is positive definite if and only if $-1<\beta \le 1$ \cite[Eq.(7.1)]{K2011}. Taking $p=\dfrac{1}{3}$ leads to $B_{1/3}\npreceq  H_{2/3}$.
\item[(iii)]Since the function $\dfrac{\sinh t}{t\left(\beta+\cosh t\right)},\,\,(-1 <\beta \le 1)$ is infinitely divisible \cite[Remark 4 (iii)]{K2008}, the function for $0\le s \le 1$ 
$$\dfrac{L(e^{2t},1)}{H_s(e^{2t},1)}=\dfrac{1}{1-s}\times\dfrac{\sinh t}{t\left(\dfrac{s}{1-s}+\cosh t\right)}$$
 is infinitely divisible if and only if $0\le s \le \dfrac{1}{2}$.
\item[(iv)] Let $s\in [0,1]$. Since $\dfrac{G(e^{2t},1)}{H_s(e^{2t},1)}=\dfrac{1}{1-s}\times\dfrac{1}{\dfrac{s}{1-s}+\cosh t}$, we have
\begin{equation}\label{remark3.5_eq01}
\vertiii{G(S,T)X} \le \vertiii{H_s(S,T)X}  \Longleftrightarrow 0\le s \le \frac{1}{2}\,\,{\rm or}\,\,\, s=1.
\end{equation}
by the similar way to the proof of Theorem \ref{prop_opt}.
We assume that there exists a mean $M$ such that $G\preceq M\preceq  L$ and $M\neq G$. Then the condition on $s\in[0,1]$ satisfying $M\preceq H_s$ is just $0\le s \le \frac{1}{2}$ by Theorem \ref{prop_opt} and \eqref{remark3.5_eq01}.  A typical example for $M$ is the power difference mean $M_u$, $(1/2\le u \le 1)$.
\end{itemize}
\end{remark}

In \cite[Corollary 2]{D2006}, the unitarily invariant norm inequality between the Heinz mean and the logarithmic mean was shown as
$$
\vertiii{Hz_v(S,T)X}\le \vertiii{L(S,T)X}\Longleftrightarrow \frac{1}{4}\le v \le \frac{3}{4},
$$
where $Hz_v(S,T)X:=\dfrac{1}{2}\left(S^vXT^{1-v}+S^{1-v}XT^v\right)$.
It may be natural to compare the Heinz mean $\vertiii{Hz_v(S,T)X}$ and the Heron mean
$\vertiii{H_s(S,T)X}$. See \cite[Theorem 3.8]{KS2014} for this comparison.

\begin{proposition}\label{prop_3.6}
Let $s,v\in[0,1]$. Then,
$$
Hz_v\preceq H_s \Longleftrightarrow {\rm(c1), \,\,\, (c2)\,\,\, or \,\,\, (c3)},
$$
where
\begin{itemize}
\item[(c1)] $0\le s \le \dfrac{1}{2}\,\,$, $\dfrac{1}{2}\le v \le 1\,\,$ and $\,\,\,2v-1 \le \dfrac{\pi/2}{\pi-\cos^{-1}\left(\dfrac{s}{1-s}\right)}$,
\item[(c2)] $0\le s \le \dfrac{1}{2}\,\,$, $0\le v \le \dfrac{1}{2}\,\,$ and $\,\,\,1-2v \le \dfrac{\pi/2}{\pi-\cos^{-1}\left(\dfrac{s}{1-s}\right)}$.
\item[(c3)] $s=1$ and $v=\dfrac{1}{2}$.
\end{itemize}
\end{proposition}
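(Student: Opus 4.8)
The plan is to follow the template used for the other results of this section: reduce $Hz_v \preceq H_s$ to positive definiteness of a single one-variable function, evaluate its Fourier transform in closed form, and read off the sign condition. Writing $a := |2v-1| \in [0,1]$ and, for $s\in[0,1)$, $\beta := \frac{s}{1-s}\in[0,\infty)$, one has $Hz_v(e^{2t},1) = e^t\cosh(at)$ and $H_s(e^{2t},1) = (1-s)e^t(\beta + \cosh t)$, so that
$$
\frac{Hz_v(e^{2t},1)}{H_s(e^{2t},1)} = \frac{1}{1-s}\cdot\frac{\cosh(at)}{\beta + \cosh t} =: \frac{1}{1-s}\,g(t),
$$
and $Hz_v\preceq H_s$ (for $s<1$) is equivalent to positive definiteness of $g$. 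The case $s=1$ is immediate and separate: then $H_1 = G$, the ratio is $\cosh(at)$, which is positive definite only for $a = 0$, i.e. $v = \tfrac12$; since $Hz_{1/2} = G$ this is exactly (c3).

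For the main range $0\le s\le \tfrac12$ I set $\theta := \cos^{-1}\beta\in[0,\tfrac{\pi}{2}]$. When $0\le a<1$ the function $g$ lies in $L^1(\mathbb{R})$, so by Bochner's theorem positive definiteness is equivalent to $\hat g \ge 0$ everywhere. I would start from the classical integral $\int_0^\infty\frac{\cos(\xi t)}{\cosh t+\cos\theta}\,dt = \frac{\pi\sinh(\theta\xi)}{\sin\theta\,\sinh(\pi\xi)}$, continue it analytically in the exponent (both sides being analytic in the strip $|\mathrm{Re}\,z|<1$) to obtain $\int_{\mathbb{R}}\frac{\cosh(zt)}{\cosh t+\cos\theta}\,dt = \frac{2\pi\sin(\theta z)}{\sin\theta\,\sin(\pi z)}$, and then, using $\cosh(at)\cos(\xi t) = \tfrac12[\cosh((a+i\xi)t) + \cosh((a-i\xi)t)]$, take real parts and simplify with product-to-sum identities to get
$$
\hat g(\xi) = \frac{\pi}{\sin\theta}\cdot\frac{\cos((\pi-\theta)a)\cosh((\pi+\theta)\xi) - \cos((\pi+\theta)a)\cosh((\pi-\theta)\xi)}{\sin^2(\pi a) + \sinh^2(\pi\xi)}.
$$

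The sign analysis is then elementary. Write $\Phi(\xi)$ for the numerator. From $\Phi(0) = 2\sin(\pi a)\sin(\theta a)\ge 0$ one gets $\cos((\pi-\theta)a)\ge\cos((\pi+\theta)a)$; letting $\xi\to\infty$ (where the term $\cosh((\pi+\theta)\xi)$ dominates) shows that $\hat g\ge0$ forces $\cos((\pi-\theta)a)\ge0$, which, because $(\pi-\theta)a\in[0,\pi]$, is exactly $a\le\frac{\pi/2}{\pi-\theta}$. Conversely this lone inequality suffices: if $\cos((\pi+\theta)a)\le0$ both summands of $\Phi$ are nonnegative, and if $\cos((\pi+\theta)a)>0$ one multiplies $\cos((\pi-\theta)a)\ge\cos((\pi+\theta)a)>0$ by $\cosh((\pi+\theta)\xi)\ge\cosh((\pi-\theta)\xi)>0$ to conclude $\Phi(\xi)\ge0$. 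Hence for $0\le a<1$ the function $g$ is positive definite iff $a\le\frac{\pi/2}{\pi-\cos^{-1}\beta}$; substituting $a = 2v-1$ for $v\in[\tfrac12,1]$ and $a = 1-2v$ for $v\in[0,\tfrac12]$ yields (c1) and (c2), while $s=1$ gave (c3).

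It remains to handle the boundary value $a=1$ (where $Hz_0=Hz_1=A$), the range $\beta>1$ (i.e. $\tfrac12<s<1$), and the degenerate case $\theta=0$ (i.e. $s=\tfrac12$), all of which fall outside the $L^1$/analytic setting or make the formula collapse. For $a=1$ and for $\beta>1$, $\frac{Hz_v(e^{2t},1)}{H_s(e^{2t},1)}$ equals, up to a positive constant, either $1-\beta(\beta+\cosh t)^{-1}$ — a constant minus a positive multiple of a function whose transform is everywhere positive (or which is not positive definite at all, when $\beta>1$) — or $\cosh t$; none of these is positive definite unless that multiple vanishes, i.e. $s=0$, which is in agreement with (c1)–(c3) (each of which forces $a=1\Rightarrow s=0$). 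For $\theta=0$ one takes $\theta\to0$ in the displayed formula for $\hat g$, obtaining the condition $a\le\tfrac12=\frac{\pi/2}{\pi}$, as required. I expect the Fourier-transform computation to be the main obstacle: getting the closed form of $\hat g$ right (the analytic continuation plus the product-to-sum bookkeeping) and then recognising that the single asymptotic constraint $\cos((\pi-\theta)a)\ge0$ is already sufficient; the non-integrable and degenerate cases ($a=1$, $\beta>1$, $\theta=0$) are the second point requiring care, since there $g$ is no longer integrable or the closed form degenerates and each must be argued by hand.
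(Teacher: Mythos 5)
Your reduction is exactly the paper's: write $Hz_v(e^{2t},1)/H_s(e^{2t},1)$ as $\frac{1}{1-s}\cdot\frac{\cosh(at)}{\beta+\cosh t}$ with $a=|2v-1|$, $\beta=\frac{s}{1-s}$, and dispose of $s=1$ separately via the non--positive definiteness of $\cosh$. The difference is what happens next: the paper finishes in one line by citing Kosaki's Proposition 7.3\,(ii) of \cite{K2011}, which states precisely that $\cosh(at)/(\beta+\cosh t)$ is positive definite iff $0\le\beta\le 1$ and $a\le\frac{\pi/2}{\pi-\cos^{-1}\beta}$, whereas you re-prove that criterion from scratch by computing $\hat g$ in closed form (your formula for $\hat g$ checks out: the two terms are complex conjugates, $|\sin(\pi(a+i\xi))|^2=\sin^2(\pi a)+\sinh^2(\pi\xi)$, and the product-to-sum step gives exactly the displayed numerator) and then running the sign analysis, which is also correct: $\Phi(0)=2\sin(\pi a)\sin(\theta a)\ge0$ plus domination of $\cosh((\pi+\theta)\xi)$ at infinity yields necessity of $\cos((\pi-\theta)a)\ge0$, and your two-case argument yields sufficiency. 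What your route buys is self-containedness (Bochner plus one classical integral instead of a citation to a memoir); what it costs is that you must handle the degenerate parameters by hand, and two of those spots are still loose in your write-up. First, for $\frac12<s<1$ (i.e.\ $\beta>1$) and \emph{general} $a\in[0,1)$ the ratio is $\cosh(at)/(\beta+\cosh t)$, not the expression $1-\beta(\beta+\cosh t)^{-1}$ you discuss; the clean argument (which the paper itself uses in Remark 3.8\,(ii) and Theorem 3.9) is to multiply by the positive definite function $1/\cosh(at)$ and invoke that $(\beta+\cosh t)^{-1}$ is not positive definite for $\beta>1$. Second, at $\theta=0$ (i.e.\ $s=\frac12$) your formula is $0/0$; the ``if'' direction follows from continuity of positive definiteness under pointwise limits, but the ``only if'' direction needs the actual limit $\lim_{\theta\to0}\Phi(\xi)/\sin\theta=2a\sin(\pi a)\cosh(\pi\xi)+2\xi\cos(\pi a)\sinh(\pi\xi)$, whose sign at infinity is governed by $\cos(\pi a)$ and does give $a\le\frac12$ as you assert. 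With those two points filled in, your proof is complete and independent of \cite{K2011}.
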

\begin{proof}
We calculate
$$
\dfrac{Hz_v(e^t,1)}{H_s(e^t,1)}=\dfrac{e^{\left(v-1/2\right)t}+e^{\left(1/2-v\right)t}}{2\left(s+(1-s)\cosh t/2\right)}.
$$
For $s=1$, since the function $\cosh x,\,\,(x\neq 0)$ is not positive definite, it is necessary that $v=1/2$, which is (c3).
For $\dfrac{1}{2}\le v \le 1$, the above function is deformed as
$$
\dfrac{Hz_v(e^t,1)}{H_s(e^t,1)}=\frac{1}{1-s}\times \frac{\cosh\left((2v-1)t/2\right)}{\dfrac{s}{1-s}+\cosh t/2}.
$$
From \cite[Proposition 7.3 (ii)]{K2011}, the positive definiteness of the above function is equivalent to
$0\le s \le \dfrac{1}{2}$ and  $2v-1 \le \dfrac{\pi/2}{\pi-\cos^{-1}\left(\dfrac{s}{1-s}\right)}$ under the assumption $\dfrac{1}{2}\le v \le 1$. This yields to (c1). (c2) is similarly obtained. 
\end{proof}

We have the following results by taking $s=\dfrac{1}{2},\,\,\dfrac{1}{3},\,\,0$ in Proposition \ref{prop_3.6}, respectively.
\begin{eqnarray*}
&& \vertiii{Hz_v(S,T)X}\le \vertiii{H_{1/2}(S,T)X} \Longleftrightarrow \dfrac{1}{4}\le v \le \dfrac{3}{4}, \\
&& \vertiii{Hz_v(S,T)X}\le \vertiii{H_{1/3}(S,T)X} \Longleftrightarrow \dfrac{1}{8}\le v \le \dfrac{7}{8}, \\
&& \vertiii{Hz_v(S,T)X}\le \vertiii{A(S,T)X} \Longleftrightarrow 0 \le v \le 1.
\end{eqnarray*}


Related to Remark \ref{remark_3.5}, we consider the comparison of the binomial mean and the Heron mean. We note that $B_{1/2}(a,b)=H_{1/2}(a,b)$ and $B_{1/3}(a,b)\le H_{2/3}(a,b)$. We are interested in finding the condition on parameters $s\in [0,1]$ and $p\in\mathbb{R}$ such that $B_p\preceq H_s$ or $H_s\preceq B_p$.

\begin{proposition}\label{prop_3.9}
Let $p\in\mathbb{R}$ and $s\in [0,1]$. Then we have
$$
p\le \dfrac{1}{2}\,\, {\rm and}\,\,0\le s \le \dfrac{1}{2} \Longrightarrow \vertiii{B_p(S,T)X}\le \vertiii{H_s(S,T)X}
$$
and
$$
p\ge \dfrac{1}{2}\,\, {\rm and}\,\, \dfrac{1}{2}\le s \le 1 \Longrightarrow \vertiii{B_p(S,T)X}\ge \vertiii{H_s(S,T)X}.
$$
\end{proposition}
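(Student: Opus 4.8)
The plan is to obtain both implications purely by composing $\preceq$-relations that are already available, so that no fresh positive-definiteness computation is needed. The hinge is the elementary pointwise identity
\[
B_{1/2}(a,b)=\left(\frac{\sqrt a+\sqrt b}{2}\right)^2=\frac12\cdot\frac{a+b}{2}+\frac12\sqrt{ab}=H_{1/2}(a,b),
\]
whence $B_{1/2}\preceq H_{1/2}$ and $H_{1/2}\preceq B_{1/2}$ hold trivially, the ratio of the two means being the constant function $1$. I will also use that $\preceq$ is transitive, which is just the Schur product theorem: a product of positive definite functions is positive definite, so that $M_1\preceq M_2$ and $M_2\preceq M_3$ imply $M_1\preceq M_3$.

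For the first implication, assume $p\le\tfrac12$ and $0\le s\le\tfrac12$. The monotonicity of the binomial mean recalled inside the proof of Proposition \ref{prop_3.7} (\cite[Theorem 9]{K2014}) gives $B_p\preceq B_{1/2}$. Proposition \ref{prop_3.4}, applied with the parameter pair $(1/2,s)$ so that its two conditions read $\tfrac12\ge s$ and $s\le\tfrac12$ (both amounting to $s\le\tfrac12$), gives $H_{1/2}\preceq H_s$. Chaining through the identity $B_{1/2}=H_{1/2}$ yields $B_p\preceq B_{1/2}=H_{1/2}\preceq H_s$, hence $B_p\preceq H_s$, that is $\vertiii{B_p(S,T)X}\le\vertiii{H_s(S,T)X}$.

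For the second implication, assume $p\ge\tfrac12$ and $\tfrac12\le s\le 1$. Proposition \ref{prop_3.4}, applied with the parameter pair $(s,1/2)$ so that its conditions read $s\ge\tfrac12$ and $\tfrac12\le\tfrac12$, gives $H_s\preceq H_{1/2}$, while \cite[Theorem 9]{K2014} gives $B_{1/2}\preceq B_p$. Chaining, $H_s\preceq H_{1/2}=B_{1/2}\preceq B_p$, hence $\vertiii{H_s(S,T)X}\le\vertiii{B_p(S,T)X}$. Since the whole argument is a composition of facts already in hand, there is no genuine obstacle; the only points needing care are keeping the two directions of Proposition \ref{prop_3.4} straight and recording the transitivity of $\preceq$. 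If a self-contained route were preferred, one could instead analyse the ratio $\dfrac{B_p(e^{2t},1)}{H_s(e^{2t},1)}=\dfrac{1}{1-s}\cdot\dfrac{(\cosh pt)^{1/p}}{\tfrac{s}{1-s}+\cosh t}$ directly, using that $\tfrac{1}{\beta+\cosh t}$ is positive definite for $-1<\beta\le1$; but this is unnecessary here.
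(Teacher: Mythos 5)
Your proof is correct and follows essentially the same route as the paper: both arguments factor the ratio through $B_{1/2}=H_{1/2}$, use the monotonicity of the binomial mean from \cite[Theorem 9]{K2014} for the $B_p$ versus $B_{1/2}$ step, and then compare $H_{1/2}$ with $H_s$. The only cosmetic difference is that you invoke Proposition \ref{prop_3.4} for the last step, while the paper re-verifies the positive definiteness of $\dfrac{B_{1/2}(e^{2t},1)}{H_s(e^{2t},1)}=\dfrac{1}{2(1-s)}\cdot\dfrac{\cosh t+1}{\cosh t+\frac{s}{1-s}}$ directly from the same criterion in \cite{K2011}.
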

\begin{proof}
We compute
$$
\frac{B_p(e^{2t},1)}{H_s(e^{2t},1)}=\frac{B_p(e^{2t},1)}{B_{1/2}(e^{2t},1)} \times \frac{B_{1/2}(e^{2t},1)}{H_s(e^{2t},1)}.
$$
Since $\dfrac{B_p(e^{2t},1)}{B_{1/2}(e^{2t},1)}$ is positive definite when $p\le \dfrac{1}{2}$ by the monotonicity of the binomial mean \cite[Theorem 9]{K2014}, it is sufficient to prove the positive definiteness of 
$$
\frac{B_{1/2}(e^{2t},1)}{H_s(e^{2t},1)}=\frac{\left(\dfrac{e^t+1}{2}\right)^2}{se^t+(1-s)\left(\dfrac{e^{2t}+1}{2}\right)}=\frac{1}{2(1-s)}\times \frac{\cosh t +1}{\cosh t+\dfrac{s}{1-s}}.
$$
This function is positive definite if and only if $\dfrac{s}{1-s}\le 1 \Longleftrightarrow 0\le s \le\dfrac{1}{2}$.
The case $p\ge \dfrac{1}{2}$ and $\dfrac{1}{2}\le s \le 1$ can be similarly shown.
\end{proof}

As we stated in the end of Section \ref{sec2}, it is known \cite[Lemma 2.1]{DDF}  that we have the first inequalities below for $x>0$ and the second inequalities are trivial:
$$
0\le s \le \dfrac{1}{2}\Longrightarrow \left(\dfrac{x^s+1}{2}\right)^{1/s}\le (1-s)\sqrt{x}+s\left(\dfrac{x+1}{2}\right)\le s\sqrt{x}+(1-s)\left(\dfrac{x+1}{2}\right)
$$
and
$$
\dfrac{1}{2}\le s \le 1 \Longrightarrow \left(\dfrac{x^s+1}{2}\right)^{1/s}\ge (1-s)\sqrt{x}+s\left(\dfrac{x+1}{2}\right)\ge s\sqrt{x}+(1-s)\left(\dfrac{x+1}{2}\right).
$$
Taking $p=s$ in Proposition \ref{prop_3.9}, we see that the stronger relations $B_s\preceq H_s$ for $0\le s\le \dfrac{1}{2}$ and $H_s\preceq B_s$ for $\dfrac{1}{2}\le s \le 1$ than the scalar orderings $B_s \le H_s$ for $0\le s\le \dfrac{1}{2}$ and $H_s\le B_s$ for $\dfrac{1}{2}\le s \le 1$, respectively obtained.

In the comparison of the scalar orderings we have tighter relations  from \cite[Lemma 2.1]{DDF} such as $B_p\le \hat{H}_s$ for $0\le s\le \dfrac{1}{2}$ and $\hat{H}_s\le B_s$ for $\dfrac{1}{2}\le s \le 1$, respectively, where $\hat{H}_s:=\hat{H}_s(a,b)=(1-s)G+sA=H_{1-s}(a,b)$. We find from \cite[Theorem 7.1]{K2014} that $\hat{H}_s \preceq H_s$ for $0\le s \le \dfrac{1}{2}$ and $H_s\preceq \hat{H}_s$ for $\dfrac{1}{2}\le s \le 1$.
It is quite natural that we have an interest in the relation $B_s\preceq \hat{H}_s$ for $0\le s \le \dfrac{1}{2}$ and $\hat{H}_s\preceq B_s$ for $\dfrac{1}{2}\le s \le 1$.
However, these do not hold in general. We show the counter-examples. We compute the function
$\dfrac{B_s(e^{2t},1)}{\hat{H}_s(e^{2t},1)}=\dfrac{\left(\cosh st\right)^{1/s}}{(1-s)+s\cosh t}=:\psi_s(t)$.
 Then the eigenvalues
of the $3\times 3$ matrix ${\left[ {\psi_{1/4}\left( {{t_i} - {t_j}} \right)} \right]_{i,j = 1,2,3}}$ with $t_1:=1,\,\,t_2:=2,\,\,t_3:=3$ are approximately obtained as $2.96626, 0.0436155$ and $-0.00987773$ by the numerical computation. Thus the function $\psi_{1/4}(t)$  is not positive definite so that we have $B_{1/4}\npreceq \hat{H}_{1/4}$. Similarly we compute the function $\dfrac{\hat{H}_s(e^{2t},1)}{B_s(e^{2t},1)}=\dfrac{(1-s)+s\cosh t}{\left(\cosh st\right)^{1/s}}=:\xi_s(t)$. Then the eigenvalues
of the $3\times 3$ matrix ${\left[ {\xi_{3/4}\left( {{t_i} - {t_j}} \right)} \right]_{i,j = 1,2,3}}$ with $t_1:=1,\,\,t_2:=2,\,\,t_3:=3$ are approximately obtained as $2.98432, 0.0182053$ and $-0.00252532$ by the numerical computation. Thus the function $\xi_{3/4}(t)$  is not positive definite so that we have $\hat{H}_{3/4}\npreceq B_{3/4}$.  
\begin{remark}
From Proposition \ref{prop_3.9} and $\hat{H}_s=H_{1-s}$, we have
$$
p\le \dfrac{1}{2} \,\,\,{\rm and}\,\,\, \dfrac{1}{2}\le s \le 1 \Longrightarrow \vertiii{B_p(S,T)X} \le \vertiii{\hat{H}_s(S,T)X}
$$
and
$$
p\ge \dfrac{1}{2} \,\,\,{\rm and}\,\,\, 0\le s \le \dfrac{1}{2} \Longrightarrow \vertiii{B_p(S,T)X} \ge \vertiii{\hat{H}_s(S,T)X}.
$$
\end{remark}
\section{A new mean inequality chain}\label{sec4}
For any positive real numbers $a$ and $b$, we have the chain mean inequality (e.g., \cite[p.167]{Bullen2003}):
$$
m\leq \h\leq G\leq \l \leq A\leq M, 
$$
where $m=\min\{a,b\}$, $M=\max\{a,b\}$.
Now we consider the ratio $\rho=\sqrt{\dfrac{2A}{A+G}}$. It is obvious that $1\leq \rho\leq \sqrt{2}$. Then we have the new chain mean inequality, by applying Lemma \ref{le2.1}.
We also prepare the following inequality which is itself interesting. 

\begin{lemma}\label{le2.3}
For  $x>0$, we have 
\[
 \frac{\sqrt{2(x+1)}}{\sqrt{x}+1}\cdot \frac{x+1}{2}\leq \frac{x+1+|x-1|}{2}.
\]
\end{lemma}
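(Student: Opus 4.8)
The plan is to recognize that the right-hand side is just $\max\{x,1\}$ and that the left-hand side is exactly $\rho\cdot A$ in the present notation, with $A=\frac{x+1}{2}$, $G=\sqrt{x}$ and $\rho=\sqrt{\frac{2A}{A+G}}=\frac{\sqrt{2(x+1)}}{\sqrt{x}+1}$ (taking $a=x$, $b=1$). So the assertion to be proved is $\rho A\le\max\{x,1\}$. First I would reduce to the case $x\ge 1$: replacing $x$ by $1/x$ one checks directly that $\rho(1/x)=\rho(x)$, $A(1/x)=A(x)/x$ and $\max\{1/x,1\}=\max\{x,1\}/x$, so the inequality at $1/x$ is precisely the inequality at $x$ divided by $x$; hence it is enough to treat $x\ge 1$ (equivalently, the two-variable form $\rho(a,b)A(a,b)\le M(a,b)$ is homogeneous and symmetric, so one may assume $a\ge b=1$), and at $x=1$ both sides equal $1$.

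For $x\ge 1$ the target is $\dfrac{\sqrt{2(x+1)}}{\sqrt{x}+1}\cdot\dfrac{x+1}{2}\le x$. Next I would put $x=t^2$ with $t\ge 1$, so that $\sqrt{x}+1=t+1$, clear this denominator, and square both sides (legitimate, since both sides are positive); this turns the claim into the polynomial inequality $(t^2+1)^3\le 2t^4(t+1)^2$. The final step is the elementary factorization
\[
2t^4(t+1)^2-(t^2+1)^3=(t-1)\left(t^5+5t^4+4t^3+4t^2+t+1\right),
\]
whose right-hand side is nonnegative for $t\ge 1$ because both factors are, with equality exactly at $t=1$, i.e. $x=1$.

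The only real work is the polynomial bookkeeping in the displayed identity; there is no analytic difficulty, and the main thing to be careful about is justifying the squaring step and the $x\mapsto 1/x$ reduction. If one prefers to avoid that reduction, the case $0<x<1$ is handled in exactly the same way after the substitution $x=t^2$ with $0<t<1$: there the target becomes $(t^2+1)^3\le 2(t+1)^2$, and one uses the companion factorization $2(t+1)^2-(t^2+1)^3=-(t-1)\left(t^5+t^4+4t^3+4t^2+5t+1\right)\ge 0$.
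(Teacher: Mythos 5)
Your proof is correct. I checked the two reductions and both factorizations: with $x=t^2$, $t\ge 1$, the claim does become $\sqrt{2}\,(t^2+1)^{3/2}\le 2t^2(t+1)$, squaring is legitimate since both sides are positive, and indeed
\[
2t^4(t+1)^2-(t^2+1)^3=t^6+4t^5-t^4-3t^2-1=(t-1)\bigl(t^5+5t^4+4t^3+4t^2+t+1\bigr)\ge 0
\]
for $t\ge 1$; the companion identity for $0<t<1$ also checks out, and the $x\mapsto 1/x$ symmetry of $\rho$ and the homogeneity of $A$ and $\max$ make the reduction to $x\ge 1$ valid. However, your route is genuinely different from the paper's. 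The paper avoids any polynomial expansion: it applies convexity of $f(t)=t^{3/2}$ at the midpoint to get $\left(\frac{x+1}{2}\right)^{3/2}\le\frac{x^{3/2}+1}{2}$ (equivalently $A\le B_{3/2}$ in mean language), which after dividing by $\sqrt{x}+1$ reduces the left-hand side to the closed form $\frac{x\sqrt{x}+1}{\sqrt{x}+1}=x-\sqrt{x}+1$, and then the bound $x-\sqrt{x}+1\le\max\{1,x\}$ is immediate in each of the two cases. The paper's argument is shorter and exposes the structural reason the inequality holds (a power-mean comparison), while yours is more mechanical but entirely self-contained polynomial bookkeeping; both case splits ($x\ge 1$ versus $x\le 1$) are present in each version, so neither saves work there.
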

\begin{proof}
The function $f(x)=x^{3/2}$ is convex on $(0,+\infty)$. So we can write
\[
f\left(\frac{x+1}{2}\right) \leq \frac{f(x)+f(1)}{2}
\]
which implies
\[
\sqrt{2(x+1)}\cdot \frac{x+1}{2} \leq x\sqrt{x}+1.
\]
Therefore we have
\[\frac{{\sqrt {2\left( {x + 1} \right)} }}{{\sqrt x  + 1}} \cdot \frac{{x + 1}}{2} \le \frac{{x\sqrt x  + 1}}{{\sqrt x  + 1}} = x - \sqrt x  + 1 \le \left\{ \begin{array}{l}
x\,\,\,\,\,\left( {\text{if}\,\,\,\,x \ge 1} \right)\\
1\,\,\,\,\,\,\left( {\text{if}\,\,\,\,0 < x \le 1} \right)
\end{array} \right.\]
which implies
\[
 \frac{\sqrt{2x+2}}{\sqrt{x}+1}\cdot \frac{x+1}{2}\leq \max \{1,x\}=\frac{x+1+|x-1|}{2}.
\]
\end{proof}

\begin{theorem}\label{th3.1}
For $a,b>0$ we have
$$
m\leq \rho\, m\leq H\leq \rho\, H\leq G\leq \rho\, G \leq L \leq \rho\, L \leq A \leq  \rho\, A\leq M.
$$
\end{theorem}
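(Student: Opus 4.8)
The plan is to normalise by homogeneity and to exploit the single identity $\rho^{2}=\dfrac{2A}{A+G}$. Since $G\le A$ we have $\rho\ge 1$, so every inequality of the form $N\le\rho N$ with $N\in\{m,H,G,L,A\}$ is immediate, and the inequalities $H\le G\le L\le A$ are classical (indeed $G\le L\le A$ is already recorded in \eqref{fund_ineq}). Thus only the five ``bridging'' inequalities
$$\rho m\le H,\qquad \rho H\le G,\qquad \rho G\le L,\qquad \rho L\le A,\qquad \rho A\le M$$
actually require proof.

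Next I would record the elementary identities $mM=ab=G^{2}=HA$, valid for all $a,b>0$; in particular $\dfrac{m}{H}=\dfrac{A}{M}$, so $\rho m\le H$ is \emph{equivalent} to $\rho A\le M$. Putting $x=a/b$ and scaling by $b$ one checks $\rho=\dfrac{\sqrt{2(x+1)}}{\sqrt{x}+1}$ and that the right-hand side of Lemma \ref{le2.3} equals $M/b$; hence Lemma \ref{le2.3} is precisely the statement $\rho A\le M$, and the two outermost bridging inequalities are settled at once.

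For the three middle inequalities I would square (all quantities being positive) and substitute $\rho^{2}=2A/(A+G)$, obtaining the equivalent polynomial forms
$$\rho H\le G\iff 2AH^{2}\le(A+G)G^{2},\qquad \rho G\le L\iff 2AG^{2}\le(A+G)L^{2},\qquad \rho L\le A\iff 2L^{2}\le A(A+G).$$
In the first, $H=G^{2}/A$ reduces the claim to $2G^{2}\le A^{2}+AG$, i.e. $(A-G)(A+2G)\ge 0$. In the second, the lower bound $L\ge G^{2/3}A^{1/3}$ of Lemma \ref{le2.1} reduces it to $A+G\ge 2A^{1/3}G^{2/3}$; writing $t=(G/A)^{1/3}\in(0,1]$ this becomes $1+t^{3}\ge 2t^{2}$, i.e. $(t-1)(t^{2}-t-1)\ge 0$ on $(0,1]$. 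In the third, the P\'olya bound $L\le\tfrac23 G+\tfrac13 A$ of Corollary \ref{co2.2} reduces it to $2(2G+A)^{2}\le 9(A^{2}+AG)$, i.e. $(8G+7A)(A-G)\ge 0$. All three conclusions hold because $G\le A$.

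The only points needing care are bookkeeping: that squaring is legitimate (automatic from positivity), that the normalisation genuinely identifies Lemma \ref{le2.3} with $\rho A\le M$, and that the two-sided estimates of $L$ invoked are exactly those established in Lemma \ref{le2.1} and Corollary \ref{co2.2}. There is no genuine analytic obstacle here; the one step worth highlighting is the reduction $\rho m\le H\iff\rho A\le M$ via $mM=HA$, which lets us avoid analysing the non-mean quantities $m$ and $M$ directly.
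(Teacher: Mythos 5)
Your proof is correct and follows essentially the same route as the paper: the non-trivial content is isolated in the five bridging inequalities, the outer two are handled via Lemma \ref{le2.3} (together with the reduction $mM=HA$, which the paper leaves implicit as ``a similar argument''), and the middle three rest on the bounds $G^{2/3}A^{1/3}\le L\le \tfrac23 G+\tfrac13 A$ from \eqref{fund_ineq}. The only difference is cosmetic --- you square and verify polynomial identities such as $(A-G)(A+2G)\ge 0$ where the paper chains square-root estimates --- and all of your reductions check out.
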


\begin{proof}
The first, third, fifth, seventh and ninth inequalities are trivial, since $\rho \ge 1$.
\begin{enumerate}
\item[(i)]
We get the tenth inequality by putting $x=\dfrac{a}{b}$ in Lemma \ref{le2.3}, since we have 
$$ \frac{\sqrt{2x+2}}{\sqrt{x}+1}\cdot \frac{x+1}{2}=\sqrt{\dfrac{x+1}{\dfrac{x+1}{2} +\sqrt{x}}}\cdot \frac{x+1}{2}.$$
\item[(ii)]
Since  we know that $L \leq \dfrac{2}{3}G+\dfrac{1}{3}A\le \dfrac{A+G}{2}$ from the third inequality of \eqref{fund_ineq}. Thus we have the eighth inequality:
\[
\rho \,L=\sqrt{\frac{2A}{A+G}} L=\sqrt{\frac{2 L^2A}{A+G}}\leq \sqrt{L A}\leq A
\]
\item[(iii)]
From Corollary \ref{co2.2} and the third inequality of \eqref{fund_ineq}, we have
$$
\rho\,G=\sqrt{\frac{2AG^2}{A+G}}\le \sqrt{\frac{2L^3}{A+G}}\le \sqrt{L^2}=L
$$
which gives the sixth inequality.
\item[(iv)]
Since we have
\[
\rho\,H=\sqrt{\frac{2H^2A}{A+G}}\leq \sqrt{\frac{H^2A}{G}}=\sqrt{HG}\leq G,
\]
we have the forth inequality.
\item[(v)]
The second inqaulity is proven by the similar argument as in (i).
\end{enumerate}
\end{proof}

\section*{Acknowledgment}
The authors would like to  express our deepest gratitude to Professor Hideki Kosaki for giving us valuable comments and suggestions to improve this manuscript.


%
\end{document}